\theoremstyle{plain}
\newtheorem{thm}{Theorem}
\newtheorem{prop}[thm]{Proposition}
\newtheorem{lem}[thm]{Lemma}
\newtheorem{cor}[thm]{Corollary}
\newcounter{claimcounter}[thm]
\theoremstyle{definition}
\newtheorem{df}[thm]{Definition}
\newtheorem{exm}[thm]{Example}
\renewcommand{\leq}{\leqslant}
\newcommand{\<}{\langle}
\renewcommand{\>}{\rangle}
\newcommand{\rto}{\rightarrow}
\DeclareMathOperator{\Co}{Co}
\DeclareMathOperator{\CHull}{CHull}
\begin{document}

\title[Closure Operators in geometries with cdim=2]{Description of closure operators in convex geometries of segments on a line}

\author {K. Adaricheva}
\address{Department of Mathematics, Hofstra University, Hempstead, New York, NY 11549, USA}
\email{kira.adaricheva@hofstra.edu}
\author{G. Gjonbalaj}
\address{Hofstra University, Hempstead, New York, NY 11549, USA}
\email{ggjonbalaj1@pride.hofstra.edu}

\keywords{Closure system, convex geometry, anti-exchange property, affine convex geometry, implicational basis, convex dimension, extreme points, Carath\'eodory condition, convex geometry of circles, convex geometry of segments}
\subjclass[2010]{05A05, 06A15, 06B99, 52B55}

\begin{abstract}
Convex geometry is a closure space $(G,\phi)$ with the anti-exchange property. A classical result of Edelman and Jamison (1985) claims that every finite convex geometry is a join of several linear sub-geometries, and the smallest number of such sub-geometries necessary for representation is called the convex dimension. In our work we find necessary and sufficient conditions on a closure operator $\phi$ of convex geometry $(G,\phi)$ so that its convex dimension equals 2, equivalently, they are represented by segments on a line. These conditions can be checked in polynomial time in two parameters: the size of the base set $|G|$ and the size of the implicational basis of $(G,\phi)$.
\end{abstract}
\maketitle
\section{Introduction}
Convex geometries were studied from different perspectives and under different names since the 1930s. R.P.~Dilworth \cite{D2} knew them as lattices with unique irredundant decompositions, and B.~Monjardet mentions many ways the convex geometries were rediscovered before the mid-80s \cite{Mo85}. An important survey by P.H.~Edelman and R.E.~Jamison \cite{EdJa} included results on several equivalent definitions of finite convex geometries and outlined a program for future studies, suggesting a list of open problems. One of the basic results proved in the paper shows that every finite convex geometry can be generated by a few \emph{linear} sub-geometries on the same base set. The minimal number of such sub-geometries generating the given convex geometry is called the \emph{convex dimension}. This parameter will be referred in the paper as \emph{cdim}. 

Convex geometries are interesting combinatorial objects which generalize a notion of convexity in Euclidean space. There are many structures which share their properties. Among them are convex objects in Euclidean space, convex sets in posets, subsemilattices in a semilattice, and others that are considered in \cite{EdJa}.  But the main driving example is geometrical one: a set of points in Euclidean space equipped with a closure operator of the convex hull. This convex geometry nowadays is called an \emph{affine} convex geometry. 

A new idea was introduced by G.~Cz\'edli \cite{C14}, who suggested using circles rather than points in $\mathbb{R}^2$ for the representation of convex geometries of the convex dimension $2$. It followed from the proof though that to represent geometries of convex dimension 2 one could use the ``circles" on lines rather than planes, i.e., segments on a line.

It was further re-enforced in the work of M.~Richter and L.~Rogers in \cite{RR17}, who showed how, more generally, to use \emph{polygons} for the representation of convex geometries on the plane. In a sense, this is a nice visualization of Theorem 5.2 in P.~Edelman and R.~Jamison \cite{EdJa} about compatible orderings of a convex geometry and its parameter $cdim$. In particular, for the convex geometries of convex dimension 2, one can use segments on a line.

In his paper, G.~Cz\'edli \cite{C14} mentions Carath\'eodory condition $(C_2)$ which is essentially a property of a closure operator of convex geometry $(G,\phi)$: if $a \in \phi(X)$, for some $X\subseteq G, a\in G$, then $a\in \phi(x_1,x_2)$, for some $x_1,x_2 \in X$. It is easy to check that the convex geometries of segments satisfy $(C_2)$, but there was no attempt to check whether the converse is true: will convex geometry with $(C_2)$ have $cdim=2$?

In this paper, we answer to this question in negative, showing that one needs additional properties to get $cdim=2$ in a convex geometry. In fact, one needs a stronger version of $(C_2)$ which we denote (2Ex), and another one which we call the Square Property, or (Sq). 

The main result is Theorem \ref{main}: a convex geometry $(X,\phi)$ has $cdim=2$ iff it satisfies (2Ex) and (Sq). But we also discuss uniqueness of representation of geometries with $cdim=2$ given in Theorem \ref{unique-rep}, which presents an interest of its own.

The paper is organized as follows: in section \ref{DF} we introduce the main concepts and definitions.
In Section \ref{EP} we discuss the extreme points in general closure spaces, and in section \ref{UR} we describe the convex geometries of $cdim=2$ that have unique representations.
 In section \ref{2Car} we discuss connection between Carath\'eodory property and (2Ex) that is needed for the main result. Finally, section \ref{MT} provides the proof to the main result and discusses the computational complexity of deciding that geometry has $cdim=2$.

\section{Definitions}\label{DF}
     
\begin{df}
Given any set $X$, a closure operator on $X$ is a mapping $\varphi: 2^{X}\rightarrow2^X$ with the following properties:
\begin{enumerate}
\item $Y\subseteq\varphi (Y)$ for every $Y\subseteq X$;
\item If $Y\subseteq Z$, then $\varphi(Y)\subseteq \varphi (Z)$ for $Y,Z \subseteq X$;
\item $\varphi(\varphi(Y))=\varphi(Y)$ for $Y\subseteq X$.
   \end{enumerate}
   \end{df}
 
Set $X$ will be called a \emph{ground set}, or a \emph{base set} for a closure system, the latter being defined as a pair $(X, \varphi)$. We can also associate the closure system with a special family of subsets called an alignment.

\begin{df}\label{Alignment}
Given any (finite) set $X$, an \textit{alignment} on $X$ is a family $\mathcal{F}$ of subsets of $X$ which satisfies two properties:
\begin{enumerate}
\item $X\in \mathcal{F}$;
\item If $Y, Z \in \mathcal{F}$, then $Y \cap Z \in \mathcal{F}$.
  \end{enumerate}
  \end{df}
 Note that the definition of alignment requires a slight modification, when the ground set is not assumed to be finite. We will consider only finite ground sets within the current paper.
 
It is well-known that any alignment on a finite ground set forms a lattice, where the meet operation $\wedge$ is the set intersection $\cap$, and the join operation $+$ is defined as follows: $Y + Z = \bigcap \{W \in \mathcal{F}: Y, Z \subseteq W\}$.

The following relationships between a closure operator and an alignment could be easily verified.
  
  \begin{prop}\label{COvsA}
Let $X$ be some finite ground set.
\begin{enumerate}
\item If $\varphi$ is a closure operator on $X$, then $\mathcal{F}=\{ Y\subseteq X:\varphi(Y)=Y\}$ is an alignment on $X$. 
\item Let $\mathcal{F}$ be an alignment on $X$. Define $\varphi (Y)=\cap\{Z\in \mathcal{F}: Y\subseteq Z\}$ for every $Y\subseteq X$. Then, $\varphi$ is a closure operator on $X$.
\item The correspondences between a closure operator and an alignment on $X$ in items (1) and (2) are inverses of each other. 
\end{enumerate}
 \end{prop}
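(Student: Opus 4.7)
The plan is to verify the three items of the proposition in sequence, noting that this is the standard Galois-type correspondence between closure operators and intersection-closed families; the only substantive bookkeeping point is that binary intersection closure from Definition \ref{Alignment} promotes to arbitrary finite intersection closure, which is needed to identify the image of $\varphi$ with $\mathcal{F}$.

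For item (1), I would first observe that $X \in \mathcal{F}$ because $\varphi(X) \subseteq X$ holds automatically (as $\varphi$ maps $2^X \to 2^X$) and $X \subseteq \varphi(X)$ by the extensivity axiom. For closure under binary intersection, given $Y, Z \in \mathcal{F}$, extensivity gives $Y \cap Z \subseteq \varphi(Y \cap Z)$, and monotonicity applied to $Y \cap Z \subseteq Y$ and $Y \cap Z \subseteq Z$ yields $\varphi(Y \cap Z) \subseteq \varphi(Y) \cap \varphi(Z) = Y \cap Z$.

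For item (2), I would check the three closure axioms separately. Extensivity is immediate since every $Z$ in the defining intersection already contains $Y$. Monotonicity follows because enlarging $Y$ can only shrink the family of $Z \in \mathcal{F}$ with $Y \subseteq Z$. Idempotence requires the preliminary remark that $\varphi(Y) \in \mathcal{F}$: by a routine induction from Definition \ref{Alignment}(2), $\mathcal{F}$ is closed under arbitrary finite intersections, and the family of $Z \in \mathcal{F}$ with $Y \subseteq Z$ is finite because $X$ is finite, and nonempty because it contains $X$. Once $\varphi(Y) \in \mathcal{F}$, it appears in the defining intersection for $\varphi(\varphi(Y))$, forcing $\varphi(\varphi(Y)) = \varphi(Y)$.

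For item (3), I would check both round trips. Starting from $\varphi$, forming $\mathcal{F}_\varphi = \{Y : \varphi(Y) = Y\}$, and then the induced operator $\varphi'$, the inclusion $\varphi'(Y) \subseteq \varphi(Y)$ follows because $\varphi(Y) \in \mathcal{F}_\varphi$ by idempotence and contains $Y$ by extensivity; the reverse inclusion follows from monotonicity, since any $Z \in \mathcal{F}_\varphi$ containing $Y$ satisfies $\varphi(Y) \subseteq \varphi(Z) = Z$. Starting from an alignment $\mathcal{F}$, forming $\varphi_\mathcal{F}$, and then $\mathcal{F}' = \{Y : \varphi_\mathcal{F}(Y) = Y\}$, the identity $\mathcal{F}' = \mathcal{F}$ reduces to showing $\varphi_\mathcal{F}(Y) = Y \iff Y \in \mathcal{F}$: if $Y \in \mathcal{F}$ then $Y$ itself appears in the defining intersection, and conversely $\varphi_\mathcal{F}(Y)$ is always in $\mathcal{F}$ by the finite-intersection remark above, so $\varphi_\mathcal{F}(Y) = Y$ forces $Y \in \mathcal{F}$. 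No genuine obstacle is expected; the only point easy to overlook is justifying finite (not just binary) intersection closure before declaring that $\varphi(Y)$ or $\varphi_\mathcal{F}(Y)$ lies in $\mathcal{F}$.
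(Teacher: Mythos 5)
Your proof is correct and complete; the paper itself offers no proof of this proposition, stating only that the correspondences ``could be easily verified,'' so there is no argument to compare against. Your write-up is the standard verification, and you rightly flag the one point worth making explicit --- that binary intersection closure extends to finite intersections so that $\varphi(Y)$ (respectively $\varphi_{\mathcal{F}}(Y)$) actually lies in the alignment, which is exactly what the idempotence and round-trip checks need.
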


\emph{An implication} $Y\rightarrow Z$ of a closure system $(X,\varphi)$ is a statement that $Z\subseteq \phi(Y)$. A set $\Sigma$ of such implications is called \emph{an implicational basis}, if any implication that holds in 
$(X,\varphi)$ is a logical consequence of the basis. One usually needs less than $2^X$ implications in order to represent a given closure operator by implications. Thus, $\Sigma$ is thought as a partial information on $\varphi$ which is needed to reconstruct it.


On the other hand, when the set of implications $\Sigma$ (on finite $X$) is given, one can define a closure operator on $X$ generated by $\Sigma$. It is convenient to use $\Sigma$ also as a notation for this operator. For every $Y\in 2^X$ we define its $\Sigma$-closure as follows. Start with $Y_0=Y$, and define $Y_{n+1}=Y_n\cup \{d: (Z\rto d)\in \Sigma, Z\subseteq Y_n\}$. Then for some $k$ we will have $Y_{k+1}=Y_k$, and we define $\Sigma(Y)=Y_k$.

The study of implicational bases of closure systems, and convex geometries in particular is quite active, see K. Adaricheva and J.B. Nation \cite{AN} and M. Wild \cite{Wi17}.

We turn now to special properties of a closure operator or alignment, which distinguish convex geometries.
 
 \begin{df}
Closure system $(X,\varphi)$ is called a \emph{convex geometry} if $\varphi$ is a closure operator on $X$ with additional properties:
\begin{enumerate}
\item $\varphi(\emptyset)=\emptyset$;
\item if $Y=\varphi(Y)$ and $x, z\notin Y$, then $z\in\varphi(Y\cup x)$ implies that $x\notin\varphi(Y\cup z)$ (The anti-exchange property).
\end{enumerate}
\end{df}

Convex geometries could be defined equivalently through an alignment.

    \begin{df}
 Pair $(X,\mathcal{F})$ is called a \textit{convex geometry} if $\mathcal{F}$ is an alignment on $X$ with additional properties:
 \begin{enumerate}
\item $\emptyset\in \mathcal{F}$;
\item if $Y\in \mathcal{F}$ and $Y \neq X$, then $\exists a\in X\setminus Y$ s.t. $Y\cup\{a\}\in \mathcal{F}$. 
  \end{enumerate}
  \end{df}

Theorem 2.1 in \cite{EdJa} establishes, among other statements, that two above definitions of convex geometry are equivalent.

We will be using both types of presentation of a convex geometry: via a closure operator or an alignment, keeping in mind that one can transfer from one to other seamlessly using Proposition \ref{COvsA}.

There is a simple type of convex geometry whose elements in alignment form a chain.

\begin{df}\label{mon}
Convex geometry $(X,\mathcal{F})$ defined on a set $X$ is called a \emph{linear} if there is a total ordering $x_1<x_2<\dots<x_n$ on $X$ such that $\{x_1, x_2, \dots, x_i\}\in\mathcal{F}$ for all $i, 1\leq i \leq n$, and these sets are the only elements of $\mathcal{F}$.
\end{df}

Given two alignments $\mathcal{F}_1$ and $\mathcal{F}_2$ defined on the same base set $X$, an operation of join on these alignments is defined as follows 
:
\[
\mathcal{F}_1 + \mathcal{F}_2=\{S\subseteq X: S=U\cap V \text{ for } U\in \mathcal{F}_1 \text{ and } V\in \mathcal{F}_2\}
\]

The following result was proved as Theorem 5.1 in \cite{EdJa}.

\begin{thm}
If $(X,\mathcal{F}_1)$ and $(X,\mathcal{F}_2)$ are convex geometries, then $(X,\mathcal{F}_1 + \mathcal{F}_2)$ is a convex geometry.  
\end{thm}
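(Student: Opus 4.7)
The plan is to work with the closure operator formulation and the associated anti-exchange property, rather than with the ``accessibility'' formulation of alignments. Let $\varphi_1, \varphi_2$ be the closure operators of $(X, \mathcal{F}_1), (X, \mathcal{F}_2)$. My first step is to identify the closure operator $\varphi$ of the join $(X, \mathcal{F}_1 + \mathcal{F}_2)$ via the formula $\varphi(Y) = \varphi_1(Y) \cap \varphi_2(Y)$. One direction is immediate: if $Y = U \cap V$ with $U \in \mathcal{F}_1$ and $V \in \mathcal{F}_2$, then $\varphi_1(Y) \subseteq U$ and $\varphi_2(Y) \subseteq V$, so $\varphi_1(Y) \cap \varphi_2(Y) \subseteq Y$. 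The reverse inclusion is extensivity. Thus $Y \in \mathcal{F}_1 + \mathcal{F}_2$ iff $\varphi(Y) = Y$, matching Proposition~\ref{COvsA}.

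Next I would verify in one short paragraph that $\varphi$ is indeed a closure operator: extensivity and monotonicity are transparent from the corresponding properties of $\varphi_1$ and $\varphi_2$, and idempotency follows because $\varphi(Y) \subseteq \varphi_i(Y)$ implies $\varphi_i(\varphi(Y)) \subseteq \varphi_i(\varphi_i(Y)) = \varphi_i(Y)$ for $i=1,2$, giving $\varphi(\varphi(Y)) \subseteq \varphi(Y)$. The normalization $\varphi(\emptyset) = \emptyset$ is immediate from $\varphi_i(\emptyset) = \emptyset$.

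The main substantive step will be verifying the anti-exchange property for $\varphi$. Suppose $Y$ is $\varphi$-closed, $x,z \notin Y$, and $z \in \varphi(Y \cup \{x\})$; I need $x \notin \varphi(Y \cup \{z\})$, i.e., $x$ fails to lie in $\varphi_i(Y \cup \{z\})$ for some $i \in \{1,2\}$. Since $Y = \varphi_1(Y) \cap \varphi_2(Y)$ and $x \notin Y$, I may assume without loss of generality that $x \notin \varphi_1(Y)$. I then split into two cases based on the location of $z$ relative to $\varphi_1(Y)$. If $z \in \varphi_1(Y)$, then $\varphi_1(Y \cup \{z\}) = \varphi_1(Y)$, which does not contain $x$. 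If $z \notin \varphi_1(Y)$, I apply the anti-exchange property of $\varphi_1$ to the $\varphi_1$-closed set $\varphi_1(Y)$ together with the points $x,z$: from $z \in \varphi(Y \cup \{x\}) \subseteq \varphi_1(\varphi_1(Y) \cup \{x\})$ I conclude $x \notin \varphi_1(\varphi_1(Y) \cup \{z\}) = \varphi_1(Y \cup \{z\})$. Either way, $x \notin \varphi(Y \cup \{z\})$.

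I expect the case analysis in the anti-exchange step to be the only place requiring care; everything else is a routine bookkeeping check. The key observation making this work is that because $Y$ is $\varphi$-closed (not merely $\varphi_i$-closed for a single $i$), any point outside $Y$ must escape the closure in at least one of the two factor geometries, which is exactly what lets the WLOG reduction to $x \notin \varphi_1(Y)$ go through.
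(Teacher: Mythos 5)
Your proof is correct. Note that the paper itself gives no proof of this statement: it is quoted verbatim as Theorem~5.1 of Edelman and Jamison, so there is no in-paper argument to compare against. Your route is self-contained and sound: the identity $\varphi(Y)=\varphi_1(Y)\cap\varphi_2(Y)$ is right (the fixed points of this map are exactly the sets $U\cap V$, it is a closure operator, and Proposition~3(3) then identifies it as the operator of the join alignment), the idempotency argument is the only non-trivial bookkeeping step and you handle it correctly, and the anti-exchange verification is clean --- the observation that $x\notin Y=\varphi_1(Y)\cap\varphi_2(Y)$ forces $x$ to escape at least one factor closure, followed by the two cases on whether $z\in\varphi_1(Y)$, is exactly the right case split, using $\varphi_1(Y\cup\{z\})=\varphi_1(\varphi_1(Y)\cup\{z\})$ to transfer the anti-exchange hypothesis to the $\varphi_1$-closed set $\varphi_1(Y)$. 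The one thing worth flagging is that you work entirely in the closure-operator formulation, whereas the join is defined on alignments; your conclusion that $(X,\mathcal{F}_1+\mathcal{F}_2)$ is a convex geometry therefore leans on the equivalence of the two definitions (Theorem~2.1 of Edelman--Jamison, quoted in Section~2 of the paper), which is legitimate but should be cited. The original Edelman--Jamison argument instead works directly with the accessibility axiom for alignments (given a closed set $S\neq X$, exhibit $a\notin S$ with $S\cup\{a\}$ closed); your version trades that for the anti-exchange case analysis, which is arguably more transparent and generalizes immediately to joins of finitely many geometries.
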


For any two geometries on the same ground set $X$, we call $(X,\mathcal{F})$ a \emph{sub-geometry} of  $(X, \mathcal{G})$, if $\mathcal{F}\subseteq \mathcal{G}$. In particular, each of $\mathcal{F}_1$, $\mathcal{F}_2$ is a sub-geometry of $(X,\mathcal{F}_1 + \mathcal{F}_2)$.

It turns out that any convex geometry could be viewed as a join of several linear sub-geometries \cite[Theorem 5.2]{EdJa}:

\begin{thm}\label{root}
Given any convex geometry $G=(X,\mathcal{F})$, $\mathcal{F}=\Sigma_{i \leq n} \mathcal{L}_i$, for some $n \in \mathbb{N}$, where $(X,\mathcal{L}_i)$ is a linear sub-geometry defined on $X$, for every $i\leq n$.
\end{thm}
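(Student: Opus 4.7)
The plan is to realize $\mathcal{F}$ as the join of the linear sub-geometries coming from so-called \emph{compatible orderings} of $(X,\mathcal{F})$. A compatible ordering is a linear order $x_1 < \cdots < x_n$ on $X$ for which every initial segment $\{x_1,\dots,x_i\}$ belongs to $\mathcal{F}$; the associated linear sub-geometry $\mathcal{L}_\pi$ consists exactly of these initial segments together with $\emptyset$, so $\mathcal{L}_\pi \subseteq \mathcal{F}$ automatically. Compatible orderings exist: starting from $\emptyset$, iteratively apply clause (2) of the alignment definition of convex geometry to add one element at a time, staying inside $\mathcal{F}$ until $X$ is reached.

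The heart of the argument is the following Key Claim: for every $C\in\mathcal{F}$ and every $x\in X\setminus C$, some compatible ordering has an initial segment $I$ with $C\subseteq I$ and $x\notin I$. I would build such an ordering in three concatenated stages. First, a chain $\emptyset = C_0\subset C_1\subset\cdots\subset C_k = C$ of closed sets, each a one-element extension of the previous. Second, a similar chain from $C$ up to a closed set $C^{\ast}$ chosen maximal in the family $\{Y\in\mathcal{F}: C\subseteq Y,\ x\notin Y\}$. Third, the extension $C^{\ast}\cup\{x\}$ followed by any further one-element extensions up to $X$. By construction this yields a compatible ordering in which $I:=C^{\ast}$ is the desired initial segment.

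The main technical tool, used in stages one and two, is the following \emph{downward extension lemma}: if $Y\subsetneq C$ are both closed, there exists $a\in C\setminus Y$ with $Y\cup\{a\}$ closed. I would prove it by picking $a\in C\setminus Y$ minimizing $|\varphi(Y\cup\{a\})|$; if $Y\cup\{a\}$ were not already closed, any $b\in \varphi(Y\cup\{a\})\setminus (Y\cup\{a\})$ would satisfy $b\in C$, and the anti-exchange property would give $a\notin \varphi(Y\cup\{b\})$, whence $\varphi(Y\cup\{b\})\subsetneq \varphi(Y\cup\{a\})$, contradicting the choice of $a$. For stage three, maximality of $C^{\ast}$ combined with the alignment form of anti-exchange forces the guaranteed one-element extension of $C^{\ast}$ to be exactly $C^{\ast}\cup\{x\}$.

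Given the Key Claim, one obtains $C = \bigcap_{x\notin C} I_{x}$, where $I_x$ is an initial segment witnessing the claim for the pair $(C,x)$. Since $X$ is finite, only finitely many compatible orderings $\pi_1,\dots,\pi_n$ are ever used as $C$ ranges over $\mathcal{F}$ and $x$ over $X\setminus C$ (indeed all $|X|!$ orderings already suffice), and every $C\in\mathcal{F}$ is then an $n$-fold intersection of elements of $\mathcal{L}_{\pi_1},\dots,\mathcal{L}_{\pi_n}$ (filling unused slots with $X$), giving $\mathcal{F} \subseteq \Sigma_{i\leq n}\mathcal{L}_{\pi_i}$; the reverse inclusion follows from $\mathcal{L}_{\pi_i}\subseteq \mathcal{F}$ and closure of $\mathcal{F}$ under intersection. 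I expect the downward extension lemma to be the main obstacle, since it is the only place the full closure-operator form of anti-exchange is used in a nontrivial way.
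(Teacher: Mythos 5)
Your proof is correct. Note that the paper does not prove this statement at all: it is quoted as Theorem 5.2 of Edelman and Jamison \cite{EdJa}, so there is no in-paper argument to compare against. Your route --- compatible orderings, the separation claim that for each closed $C$ and each $x\notin C$ some compatible ordering has an initial segment containing $C$ and omitting $x$, and the one-element-extension lemma proved by minimizing $|\varphi(Y\cup\{a\})|$ and invoking anti-exchange --- is essentially the standard Edelman--Jamison argument, and all the steps check out (including the maximality argument forcing the guaranteed extension of $C^{\ast}$ to be by $x$, and the reduction of several initial segments of the same ordering to a single one when forming the $n$-fold intersection).
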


As a consequence, it is of interest to define a following parameter associated with a convex geometry:

\begin{df}\cite{EdJa}\label{Mon}
Given convex geometry $G=(X,\mathcal{F})$, the convex dimension $cdim$ of $G$  is a minimal number of linear sub-geometries $(X,\mathcal{L}_i), i\leq n$, needed to realize $\mathcal{F}$ as $\Sigma_{i \leq n} \mathcal{L}_i$.
\end{df}

The following example remains to be the main driving model of convex geometries.

\begin{df}\label{affine}
\textit{An affine convex geometry} is a convex geometry $Co(\mathbb{R}^n,X)=(X, ch)$, where $X$ is a set of points in $\mathbb{R}^n$ and $ch$ is closure operator of relative convex hull, which is defined as follows:
for $Y\subseteq X, ch(Y)=\CHull(Y)\cap X$, where $\CHull$ is a usual convex hull operator. 
\end{df}

Affine convex geometries form a sub-class of atomistic closure systems, i.e. systems where each singleton is closed. In particular, not every convex geometry can be realized as affine. To amend this, the following generalization from points to circles was suggested in \cite{C14}. It can easily be generalized to the balls in $\mathbb{R}^n$. 




\begin{df}\label{GeomSegm}
Consider closure system $F=(X, ch_{c})$, where $X$ is a set of segments in $\mathbb{R}$, or circles in $\mathbb{R}^2$, and closure operator $ch_{c}$ is defined as follows:  $ch_{c}(Y)=\{z\in X: \tilde{z}\subseteq \CHull(\cup \tilde{y}: y\in Y)\}$ for $Y\subseteq X$, where $\CHull$ is a usual convex hull operator and $\tilde{x}$ is a set of points in $x \in X$. We call $F$ a \textit{geometry of segments on a line}, or \textit{geometry of circles on a plane}. 
\end{df}

The following statement is an easy consequence of \cite[Lemma 2.4]{Kinc17}.

\begin{prop} For any finite set of segments $X$ on a line that do not have common end-points, the closure system $F=(X, ch_{c})$ is a convex geometry.
\end{prop}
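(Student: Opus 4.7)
The strategy is to reduce everything to elementary facts about intervals on the line. Each segment $x\in X$ is an interval $\tilde{x}=[a_x,b_x]$ with $a_x<b_x$, and the convex hull of a finite union of intervals in $\mathbb{R}$ is simply the interval $[\min a_y,\max b_y]$ spanned by the extreme endpoints. Writing $\ell_Y=\min_{y\in Y}a_y$ and $r_Y=\max_{y\in Y}b_y$ for $Y\neq\emptyset$, the closure takes the transparent form
\[
ch_c(Y)=\{z\in X:\ell_Y\leq a_z\ \text{and}\ b_z\leq r_Y\},
\]
and $ch_c(\emptyset)=\emptyset$ since no non-degenerate segment is contained in the empty set.

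The plan is first to verify the three closure axioms. Extensivity follows because $[a_y,b_y]\subseteq[\ell_Y,r_Y]$ for every $y\in Y$. Monotonicity is immediate from $Y\subseteq Z$ implying $\ell_Z\leq\ell_Y$ and $r_Y\leq r_Z$. For idempotence, note that $w\in ch_c(Y)$ forces $[a_w,b_w]\subseteq[\ell_Y,r_Y]$, so $\ell_{ch_c(Y)}=\ell_Y$ and $r_{ch_c(Y)}=r_Y$; hence $ch_c(ch_c(Y))=ch_c(Y)$. The condition $ch_c(\emptyset)=\emptyset$ from the convex geometry definition has already been noted.

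The heart of the proof is the anti-exchange property. Suppose $Y=ch_c(Y)$ and $x,z\notin Y$ with $z\in ch_c(Y\cup\{x\})$. Then $\ell_Y\leq a_z\vee a_x\leq\ldots$—more concretely, $z\notin Y$ means $a_z<\ell_Y$ or $b_z>r_Y$. I would split into cases accordingly: if $a_z<\ell_Y$, then the only way for $z\in ch_c(Y\cup\{x\})$ is $a_x\leq a_z$, and the no-common-endpoints hypothesis upgrades this to the strict inequality $a_x<a_z$; then $\ell_{Y\cup\{z\}}=a_z>a_x$ shows $x\notin ch_c(Y\cup\{z\})$. The case $b_z>r_Y$ is symmetric, using $b_x>b_z$ strictly. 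If both $a_z<\ell_Y$ and $b_z>r_Y$ hold, then $[a_x,b_x]\supseteq[a_z,b_z]$ strictly on both sides, so the interval $[\ell_{Y\cup\{z\}},r_{Y\cup\{z\}}]=[a_z,b_z]$ fails to contain $[a_x,b_x]$ and again $x\notin ch_c(Y\cup\{z\})$.

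The only non-routine point, and the one where the hypothesis that no two segments share an endpoint is essential, is the upgrade from $\leq$ to $<$ in the case analysis above; without it, two segments could be nested with a common left endpoint, producing a simultaneous exchange and breaking the geometry. Alternatively, one may simply cite \cite[Lemma 2.4]{Kinc17}, which handles a more general convex-hull closure on balls; the statement of the proposition follows by specializing to the one-dimensional case.
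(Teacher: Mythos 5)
Your argument is correct, and it takes a genuinely more self-contained route than the paper, which gives no direct proof at all: the paper simply records the proposition as an easy consequence of Lemma 2.4 of Kincses \cite{Kinc17}, which handles the closure $ch_c$ for more general convex bodies. Your elementary verification, built on the explicit formula $ch_c(Y)=\{z\in X:\ell_Y\le a_z \text{ and } b_z\le r_Y\}$, buys something the citation obscures: it isolates exactly where the no-common-endpoints hypothesis enters, namely the upgrade from $a_x\le a_z$ to $a_x<a_z$ (and symmetrically on the right), without which anti-exchange genuinely fails (for instance $Y=\{[1,10]\}$, $x=[0,5]$, $z=[0,6]$ gives $z\in ch_c(Y\cup\{x\})$ and $x\in ch_c(Y\cup\{z\})$). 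Two small points are worth tightening but are not real gaps: the anti-exchange step tacitly assumes $x\ne z$, which is needed for the endpoint hypothesis to yield strictness and is implicit in the standard formulation of the property; and the dichotomy ``$a_z<\ell_Y$ or $b_z>r_Y$'' presupposes $Y\ne\emptyset$, so the case $Y=\emptyset$ should be dispatched separately (there $z\in ch_c(\{x\})$ forces $a_x<a_z$, hence $x\notin ch_c(\{z\})$).
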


A geometry of circles is not atomistic in general. For convex geometry of circles $F=(X, ch_c)$, it is possible that $ch_c(\{x\})=\{x,y\}$ for $x, y \in X$ that describes a case when circle $y$ is inside circle $x$. 

    



\section{Extreme points in closure systems}\label{EP}

We start by recalling that for every closure system $(G, \phi)$ and for every $S\subseteq G$ one can define a \emph{restriction} of $\phi$ on $S$ as follows: $\phi_S(Y)=\phi(Y)\cap S$, for every $Y\in 2^S$. See, for example Lemma 2-2.2 in \cite{AN}, which states that $\< S, \phi_S\>$ is a closure system on $S$. Moreover, if $(G,\phi)$ is a convex geometry then $(S, \phi_S)$ is also a convex geometry \cite{EdJa}.

Recall that $x \in G$ is called an \emph{extreme point} of $(G, \phi)$, if $x \not \in \phi(G\setminus \{x\})$. The set of all extreme points of $(G, \phi)$ is denoted $Ex(G)$.

We will be interested in a restriction of convex geometry $(G, \phi)$ on $S\subseteq G$ such that $G\setminus S\subseteq Ex(G)$.

We want to establish the implicational version of the restriction of closure system $(G, \phi)$ on $S\subseteq G$. If $\Sigma$ is the set of implications on $G$ and $S\subseteq G$, then we denote $\Sigma_S = \{X\rto d: X\subseteq S\}\subseteq \Sigma$.

\begin{prop}
Let $(G, \phi)$ be a closure system on finite set $G$ and $S\subseteq G$ such that $G\setminus S\subseteq Ex(G)$. If $\Sigma$ is any implicational basis of $(G, \phi)$, then $\Sigma_S$ is an implicational basis of $(S, \phi_S)$.
\end{prop}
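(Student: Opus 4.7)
The plan is to first establish the key structural fact that for every $Y\subseteq S$ one has $\phi(Y)\subseteq S$, so $\phi_S(Y)=\phi(Y)$. Indeed, suppose for contradiction that some $d\in \phi(Y)\cap(G\setminus S)$ with $d\notin Y$. Since $Y\subseteq S$ and $d\notin S$, we have $Y\subseteq G\setminus\{d\}$, hence $d\in\phi(Y)\subseteq\phi(G\setminus\{d\})$; but $d\in G\setminus S\subseteq Ex(G)$, contradicting the definition of extreme point. This fact is really the only content of the proposition; everything else is bookkeeping.

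Next I would verify the two standard directions that make $\Sigma_S$ an implicational basis of $(S,\phi_S)$. For soundness, any $X\to d$ in $\Sigma_S$ satisfies $X\subseteq S$ and $d\in\phi(X)$; by the key fact, $d\in S$, so $d\in\phi(X)\cap S=\phi_S(X)$. Thus every implication of $\Sigma_S$ holds in $(S,\phi_S)$.

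For completeness, take any $Y\subseteq S$ and $z\in\phi_S(Y)=\phi(Y)$. Run the iterative $\Sigma$-closure procedure from the paper: $Y_0=Y$, $Y_{n+1}=Y_n\cup\{d:(X\to d)\in\Sigma,\ X\subseteq Y_n\}$, stabilizing at $\phi(Y)$. By induction every $Y_n$ lies in $S$: the base case is $Y_0=Y\subseteq S$, and if $Y_n\subseteq S$ then each newly added element $d$ comes from some implication $X\to d\in\Sigma$ with $X\subseteq Y_n\subseteq S$, so by the key fact $d\in S$, giving $Y_{n+1}\subseteq S$. Crucially, each such implication satisfies $X\subseteq S$, so it belongs to $\Sigma_S$. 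Hence the same sequence is produced by iteratively closing $Y$ under $\Sigma_S$, yielding $\Sigma_S(Y)=\Sigma(Y)=\phi(Y)=\phi_S(Y)$, so $z$ is derivable from $\Sigma_S$.

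The only non-routine step is the initial fact that closures of subsets of $S$ do not leak outside $S$; once this is in hand, both soundness and completeness reduce to a direct unpacking of the iterative closure algorithm. I do not anticipate any serious obstacle.
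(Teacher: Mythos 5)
Your proposal is correct and follows essentially the same route as the paper: both hinge on the observation that $\phi(Y)\subseteq S$ for $Y\subseteq S$ (which the paper gets via $\phi(Y)\subseteq\phi(S)\subseteq S$ and you get by a direct appeal to extremality of points of $G\setminus S$), followed by the same induction on the iterative $\Sigma$-closure showing every $Y_n$ stays inside $S$ so only implications of $\Sigma_S$ are ever invoked. The soundness/completeness packaging is just a slightly more explicit presentation of the paper's chain $\phi_S(Y)=\phi(Y)=\Sigma(Y)=\Sigma_S(Y)$.
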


\begin{proof}
We need to show that $\phi_S(Y)=\Sigma_S(Y)$, for every $Y\in 2^S$.
Since $G\setminus S$ holds only extreme points, we have $\phi(Y)\subseteq \phi(S)\subseteq S$, thus, $\phi_S(Y)=\phi(Y)$. 

We claim that also $\Sigma(Y)=\Sigma_S(Y)$. On one hand, apparently, 
$\Sigma_S(Y)\subseteq \Sigma(Y)$, because $\Sigma_S\subseteq \Sigma$. 

The inverse inclusion follows from the observation that every $Y_{n+1}$ in computation with respect to $\Sigma$ will only include elements $d$ from implications $X\rto d$, where $X\subseteq Y_n$, and we can show by induction that $Y_n\subseteq S$, therefore only implications from $\Sigma_S$ are actually used to build $Y_{n+1}$. Note that any implication $X\rto d$ in $\Sigma_S$ cannot have $d \in G\setminus S$. This is why every $Y_n\subseteq S$, inductively from assumption $Y\subseteq S$. 

Thus, $\phi_S(Y)=\phi(Y)=\Sigma(Y)=\Sigma_S(Y)$, and we are done.
\end{proof}

We will also need an easy observation about extreme points.

\begin{prop}\label{ex}
If $x \in S$, $S\subseteq G$ and $x$ is an extreme point of $(G, \phi)$, then it is an extreme point of restriction $(S, \phi_S)$ of $(G, \phi)$. 
\end{prop}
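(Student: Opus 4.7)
The plan is to unfold the two definitions and use nothing more than monotonicity of $\phi$. Extremality of $x$ in $(G,\phi)$ means $x\notin \phi(G\setminus\{x\})$. Extremality of $x$ in $(S,\phi_S)$ means $x\notin \phi_S(S\setminus\{x\})=\phi(S\setminus\{x\})\cap S$. Since $x\in S$ by hypothesis, the latter reduces to showing $x\notin \phi(S\setminus\{x\})$.

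First I would note the inclusion $S\setminus\{x\}\subseteq G\setminus\{x\}$, which is immediate from $S\subseteq G$. Applying monotonicity of $\phi$ (property (2) of a closure operator) gives $\phi(S\setminus\{x\})\subseteq \phi(G\setminus\{x\})$. Since $x$ does not belong to the larger set, it cannot belong to the smaller one either, so $x\notin \phi(S\setminus\{x\})$, and hence $x\notin \phi_S(S\setminus\{x\})$.

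There is really no obstacle here; the statement is essentially a one-line consequence of monotonicity combined with the definition of the restricted operator $\phi_S$. The only thing to be slightly careful about is that the restriction is defined by intersecting with $S$, so one must verify that $x\in S$ before concluding; this is given in the hypothesis.
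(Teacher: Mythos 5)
Your proof is correct and follows the same route as the paper's own (one-line) argument: the paper likewise deduces $x\notin\phi(S\setminus\{x\})$ from $x\notin\phi(G\setminus\{x\})$ via monotonicity. Your version simply spells out the definition of $\phi_S$ and the role of the hypothesis $x\in S$, which the paper leaves implicit.
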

\begin{proof}
Indeed, if $x \not \in \phi(G\setminus \{x\})$, then $x \not \in \phi(S\setminus \{x\})$.
\end{proof}

We end this section by important characterization of convex geometries via extreme points.

\begin{thm}\label{extp}\cite{EdJa}
A closure system $(X,\phi)$ is a convex geometry iff $Y=\phi(Ex(Y))$, for any closed set $Y\subseteq X$.
\end{thm}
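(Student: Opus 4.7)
The plan is to prove both directions by leveraging the two equivalent definitions of convex geometry (closure operator vs.\ alignment) established in Section~\ref{DF}, together with the fact recalled at the start of Section~\ref{EP} that the restriction $(Y, \phi_Y)$ of a convex geometry to a closed subset $Y$ is itself a convex geometry.

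For the forward direction, assume $(X, \phi)$ is a convex geometry and fix a closed set $Y$. Let $Z = \phi(Ex(Y))$; then $Z \subseteq Y$ and $Z$ is closed. I would argue by contradiction, supposing $Z \subsetneq Y$. By finiteness, choose a closed set $Z'$ with $Z \subseteq Z' \subsetneq Y$ that is maximal among such. Applying the alignment definition of convex geometry to $(Y, \phi_Y)$ at the proper closed subset $Z'$, there exists $a \in Y \setminus Z'$ with $Z' \cup \{a\}$ closed. Maximality of $Z'$ then forces $Z' \cup \{a\} = Y$, so $Y \setminus \{a\} = Z'$ is closed, which gives $a \notin \phi(Y \setminus \{a\})$; hence $a \in Ex(Y) \subseteq \phi(Ex(Y)) = Z \subseteq Z'$, contradicting $a \in Y \setminus Z'$.

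For the backward direction, assume $Y = \phi(Ex(Y))$ for every closed $Y$, and verify the anti-exchange property by contradiction. Suppose $Y$ is closed, $x, z \in X \setminus Y$ are distinct, and both $z \in \phi(Y \cup \{x\})$ and $x \in \phi(Y \cup \{z\})$. Set $W = \phi(Y \cup \{x, z\})$, so that $W = \phi(Y \cup \{x\}) = \phi(Y \cup \{z\})$. I would show $Ex(W) \subseteq Y$. For $w = x$, the set $W \setminus \{x\}$ contains $Y \cup \{z\}$, whose closure is all of $W$, so $x$ is not extreme; symmetrically for $w = z$. For $w \in W \setminus (Y \cup \{x, z\})$, the set $W \setminus \{w\}$ still contains $Y \cup \{x, z\}$, so its closure is $W$ and $w$ is not extreme. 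Hence $Ex(W) \subseteq Y$, and the hypothesis yields $W = \phi(Ex(W)) \subseteq \phi(Y) = Y$, contradicting $x \in W \setminus Y$. The axiom $\phi(\emptyset) = \emptyset$ is either built into the hypotheses or established as a short side-remark by applying the extreme-point identity to the smallest closed set.

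Neither direction looks technically deep; the main place for care is bookkeeping. In the forward direction, the heart of the argument is the maximality plus alignment-extension step, which is exactly the feature distinguishing convex geometries among all alignments. In the backward direction, the crucial (and easy) observation is that $x$ and $z$ are interchangeable in generating $W$ from $Y$, so neither can be extreme in $W$, and no element of $W \setminus (Y \cup \{x, z\})$ can be extreme either, funneling all extreme points of $W$ into $Y$.
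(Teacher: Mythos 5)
The paper does not prove this statement; it is quoted from Edelman--Jamison with a citation, so there is no in-paper proof to compare against. Your argument is correct in both substantive directions and is essentially the standard one: the forward direction combines the alignment (accessibility) characterization of convex geometries with the fact that the restriction to a closed set is again a convex geometry, and the maximal-closed-set argument correctly produces an extreme point of $Y$ outside $Z'$; the backward direction's observation that $x$ and $z$ generate the same closed set $W$ over $Y$, so that $Ex(W)\subseteq Y$ and hence $W\subseteq\phi(Y)=Y$, is exactly the right way to contradict a failure of anti-exchange. The one caveat is your parenthetical claim that $\phi(\emptyset)=\emptyset$ could be ``established as a short side-remark by applying the extreme-point identity to the smallest closed set'': this does not work. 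If $Y_0=\phi(\emptyset)$ then monotonicity gives $\phi(Y_0\setminus\{w\})\supseteq\phi(\emptyset)=Y_0$ for every $w\in Y_0$, so $Ex(Y_0)=\emptyset$ and the identity $Y_0=\phi(Ex(Y_0))=\phi(\emptyset)$ holds tautologically; the indiscrete closure system with $\phi(\emptyset)=X$ satisfies the extreme-point identity for every closed set yet is not a convex geometry. So the condition $\phi(\emptyset)=\emptyset$ must indeed be part of the standing hypotheses on the closure system (as it is in Edelman--Jamison's setup), i.e., only the first branch of your disjunction is available. With that understanding, your proof is complete.
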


\section{Geometries of convex dimension 2}\label{UR}

It was proved in \cite{C14} that every convex geometry of $cdim=2$ is represented as $(X, ch_c)$, for some set $X$ of circles on the plane, but it was also noted that, in effect, just segments on a line are needed for representation. This observation was also reinforced in \cite{RR17}, where the representation by segments was realized in the spirit of Theorem \ref{root}. We give an example of representation for $cdim=2$ used in \cite{RR17} in the following example, and then summarize these results in Theorem \ref{segments} for completeness of the argument.

Consider convex geometry $(X,\phi)$ of $cdim=2$. By Definition \ref{Mon}, there exist two strict linear orders $<_L$ and $<_R$ on set $X$ such that $(X,\phi)=(X,<_L)+ (X, <_R)$, where $(X, <_L)$ is a notation of geometry on $X$ with monotone alignment defined by $<_L$, and similarly for $(X, <_R)$. 

\begin{exm}\label{un}
\end{exm}
Let $X=\{a,b,c,d\}$. Consider two linear orders: $a<_L b<_L c <_L d$ and $c<_R b<_R d <_R a$ and let $(X,\phi)=(X,<_L)+ (X, <_R)$.
We check that the alignment of $(X,<_L)$ is $\mathcal{F}_L=\{\emptyset, \{a\},\{a,b\},\{a,b,c\},X\}$ and alignment of $(X, <_R)$ is $\mathcal{F}_R=\{\emptyset, \{c\},\{c,b\},\{c,b,d\},X\}$. These two linear alignments generate alignment $\mathcal{F}=\mathcal{F}_L+\mathcal{F}_R = \mathcal{F}_L\cup \mathcal{F}_R \cup \{\{b\}\}$, which coincides with the set of all closed sets of closure operator $\phi$, see Proposition \ref{COvsA}. 

This geometry can be turned into geometry of segments on a line: take a copy of $\mathbb{R}$, pick any $r \in \mathbb{R}$, say, $r=0$, then place copies of elements from $X$ in the ray of negative numbers, in the increasing order given by $<_L$, when moving from $0$ to $-\infty$, and place copies of elements from $X$ in the ray of positive numbers, in the increasing order given by $<_R$, moving from $0$ to $+\infty$. Indicating the position of $r=0$ by $\nabla$, we  can formally present it as 
$(d\ c\ b \ a \nabla c\ b\ d\ a)$. Every element of $X$ can now be identified by a segment on a line with end-points being two copies of that element on the left and right from $\nabla$.

For example, check that $\{b\}$ is a $\phi$-closed set: segment with end-points $b$ on both sides of $\nabla$ does not contain any other segment $a,c,d$. On the other hand, $\phi(d)=\{d,c,b\}$, which is manifested by having segments $c,b$ inside segment $d$.
\\

\begin{thm}\label{segments}
Let $(X,\phi)$ be a convex geometry. Then the following statements are equivalent:
\begin{itemize}
\item[(1)] $(X,\phi)$ has $cdim=2$;
\item[(2)] $(X,\phi)$ has a representation by segments on a line.
\end{itemize}
\end{thm}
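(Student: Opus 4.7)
The plan is to prove both implications by explicit construction, leveraging one key observation: on a line, the convex hull of a finite union of segments is the shortest closed interval containing them all, and this interval is determined entirely by the minimum of the left endpoints and the maximum of the right endpoints. This decouples the condition ``segment $z$ lies in $ch_c(Y)$'' into two independent comparisons, one for left endpoints and one for right endpoints, and each of these is exactly the closure condition of a linear geometry.

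For $(1)\Rightarrow(2)$: Unfolding Definition \ref{Mon}, the hypothesis supplies two strict linear orders $<_L$ and $<_R$ on $X=\{x_1,\dots,x_n\}$ with $(X,\phi)=(X,<_L)+(X,<_R)$. I would mimic the construction of Example \ref{un}: fix the origin $0\in\mathbb{R}$ and, writing $X$ in $<_L$-order as $x_1<_L x_2<_L\cdots<_L x_n$, place a labeled copy of $x_i$ on the negative ray so that the copies appear in $<_L$-order as one moves from $0$ toward $-\infty$; then place a second family of copies on the positive ray in $<_R$-order as one moves from $0$ toward $+\infty$. Each element of $X$ is identified with the closed segment joining its two labels, and a generic choice of positions guarantees that no two segments share an endpoint. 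Verifying $ch_c=\phi$ then reduces to the decoupling observation above: the condition $\min_{y\in Y}l_y\leq l_z$ is equivalent to the existence of $y\in Y$ with $y\geq_L z$, which in turn is equivalent to $z$ lying in the $<_L$-closure of $Y$, and the analogous equivalence holds for right endpoints and $<_R$.

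For $(2)\Rightarrow(1)$: Given a representation by segments with endpoints $l_x<r_x$, I would define $<_L$ on $X$ as the reverse of the natural order on the left endpoints (so that the $<_L$-largest element has the leftmost $l_x$) and $<_R$ as the natural order on the right endpoints; both are strict linear orders since no two segments share an endpoint. Applying the same decoupling gives $z\in ch_c(Y)$ iff $z$ lies in both the $<_L$-closure and the $<_R$-closure of $Y$, so $(X,\phi)=(X,<_L)+(X,<_R)$ and $cdim\leq 2$. The degenerate case in which the geometry is linear corresponds to a representation by mutually nested segments.

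I do not expect a substantive obstacle here: once the two orderings are arranged to track left and right endpoints with opposite conventions, both directions reduce to the same calculation. The one point of care is orienting $<_L$ and $<_R$ correctly with respect to the endpoints, since a flipped convention would still produce a join of two linear alignments but for a dual of the intended geometry rather than the geometry itself.
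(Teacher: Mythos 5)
Your proof is correct, and the two directions compare to the paper's in an instructive way. For $(1)\Rightarrow(2)$ you use exactly the construction the paper imports from Richter--Rogers and illustrates in Example \ref{un}, so that direction is essentially identical. For $(2)\Rightarrow(1)$ the paper does not read the two orders off an arbitrary segment representation directly; it first normalizes the representation by adding a constant $c$ to every right endpoint so that all segments come to share a common point, proves that the translation $[a_i,b_i]\mapsto[a_i,b_i+c]$ is an isomorphism of closure systems (via the equivalence $[a_j,b_j]\in\phi([a_i,b_i],[a_k,b_k])$ iff $\min(a_i,a_k)\leq a_j$ and $b_j\leq \max(b_i,b_k)$), and only then invokes the Richter--Rogers form. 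Your decoupling observation --- that $\CHull$ of any union of intervals on a line is $[\min_y l_y,\max_y r_y]$, so membership in $ch_c(Y)$ splits into one comparison of left endpoints and one of right endpoints, each of which is precisely a linear closure --- makes the normalization step unnecessary and yields $\phi(Y)=\phi_L(Y)\cap\phi_R(Y)$ directly; this is a genuine, if modest, simplification, and it also absorbs the paper's side remark that $\phi(Y)=\phi(y_1,y_2)$ for some $y_1,y_2\in Y$. The only caveat, which you flag and which the paper also glosses over, is the degenerate case: a nested family of segments gives a linear geometry, so strictly speaking (2) yields $cdim\leq 2$ rather than $cdim=2$.
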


\begin{proof}
(1) implies (2) using a representation in \cite{RR17} and \cite[Theorem 5.2]{EdJa}.  

We shall prove
that (2) implies (1). Assume that $(X,\phi)$ is set with a closure operator $\phi$ and 
the anti-exchange property. Assume that $(X,\phi)$ has a representation of line 
segments $\{[a_i,b_i] : 1\le i\le n\}$, where $a_i,b_i\in \mathbb{R}$. Our goal 
is to show that this representation is isomorphic to the construction of \cite{RR17}, and implying that $(X,\phi)$ is $cdim=2$. 

On that note, let 
$\alpha=$max$\{a_1,a_2,...,a_n\}$ and $\beta=$min$\{b_1,b_2,...,b_n\}$. Observe that if
$\beta > \alpha$ then our geometry has the same representation as in \cite{RR17}, 
hence, $cdim=2$. So assume that $\beta<\alpha$. Let $c=(\alpha-\beta)+1$. We 
now form a new convex geometry $(X^*,\phi^*)$ where the representation of the geometry 
has line segments of the form $\{[a_i,b_i + c] : 1\le i \le n\}$. We notice that 
$(X,\phi^*)$ is a geometry that has the same representation as in \cite{RR17}, hence, its convex dimension is 2. 

Therefore, to complete the proof we will show that 
$(X,\phi)$ is isomorphic to $(X^*,\phi^*)$. We claim $f: X\rightarrow X^*$ by 
$f([a_i,b_i]=[a_i,b_i+c]$ is an isomorphism. Clearly $f$ is one-to-one and onto map of one ground set to another. It remains to show that $f$ induces the mapping of closed sets to closed set. So we need to check $x \in \phi(Y)$ iff $f(x) \in \phi^*(f(Y)$, for any $Y\subseteq X$. Moreover, for any set of intervals on the line, $\phi(Y)=\phi(y_1,y_2)$ for some $y_1,y_2\in Y$.

So we have $[a_j,b_j]\in \phi([a_i,b_i], [a_k,b_k])$ iff $\min(a_i,a_j) < a_j< b_j < \max(b_i, b_k)$ iff $\min(a_i,a_j) < a_j< b_j+c < \max(b_i+c, b_k+c)$ iff $[a_j,b_j+c]\in \phi^*([a_i,b_i+c], [a_k,b_k+c])$, which is what is needed.

\end{proof}

Going back to Example \ref{un}, we note that one can switch two orders, placing $<_L$ on the right and $<_R$ on the left:
$(a\ d\ b \ c \nabla a\ b\ c\ d)$. This is the same representation, since both chains did not change.
We want to investigate when representation of geometry with $cdim=2$ is unique up to a switch of two chains, so  
now we give an example when geometry of $cdim=2$ has more than one representation.

\begin{exm}\label{switch}
\end{exm}
Let $X=\{a,b,c,1,2,3\}$, and convex geometry on $X$ of $cdim=2$ is represented as
$(b \ a\ c\ 2 \ 1\ 3 \ \nabla \ 2\ 3\ 1\ c\ b\ a )$. Note that the subset $\{1,2,3\}$ is an initial segment of each of two chains, while $\{a,b,c\}$ represents an ending segment of both chains. In particular, $x\rightarrow p$, for every $x \in \{a,b,c\}$ and $p \in \{1,2,3\}$. We also notice that there is a further splitting within group $\{a,b,c\}=\{c\}\cup \{a,b\}$ so that $a\rightarrow c$ and $b \rightarrow c$, which is reflected in the fact that subchains of two chains on $\{a,b,c\}$ start with $c$ and end with $\{a,b\}$ ordered specifically in each chain.

Apparently, one can switch portions of chains on subset $\{1,2,3\}$ between left and right chains, while leaving the rest of elements ordered as before, to obtain the same convex geometry, but represented by two different chains: $(b \ a\ c\ 1 \ 3\ 2 \ \nabla \ 3\ 1\ 2\ c\ b\ a )$.\\

This example is generalized in the following statement.

\begin{lem}\label{suffi}
Let $(X,\phi)$ be a convex geometry of $cdim=2$, and $X=\{p_1,\dots, p_k\}\cup \{q_1,\dots q_s\}$, where $k,s>1$. If $(X, \phi)=(X,<_L)+ (X, <_R)$ such that both of two chains start with some distinct permutations $i,j:\mathbf{s}\rightarrow \mathbf{s}$ of elements $\{q_1,\dots q_s\}$, and end with some distinct permutations $m,n:\mathbf{k}\rightarrow \mathbf{k}$ of elements $\{p_1,\dots, p_k\}$, i.e. $q_{i(1)}<_L q_{i(2)}<_L\dots<_L q_{i(s)} <_L p_{m(1)}<_L p_{m(2)}<_L\dots<_R p_{m(k)}$ and $q_{j(1)}<_R q_{j(2)}<_R\dots<_R q_{j(s)} <_R p_{n(1)}<_R p_{n(2)}<_R\dots<_R p_{n(k)}$, then there exists another representation of $(X,\phi)$. 
\end{lem}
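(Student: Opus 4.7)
The plan is to exhibit an explicit second representation by swapping the $Q$-portions of the two chains while leaving the $P$-portions attached to their original chain. Writing $Q=\{q_1,\dots,q_s\}$ and $P=\{p_1,\dots,p_k\}$, I would define
\begin{align*}
<_L':\;& q_{j(1)}<_L'\cdots <_L' q_{j(s)}<_L' p_{m(1)}<_L'\cdots <_L' p_{m(k)},\\
<_R':\;& q_{i(1)}<_R'\cdots <_R' q_{i(s)}<_R' p_{n(1)}<_R'\cdots <_R' p_{n(k)},
\end{align*}
so that $<_L'$ carries the $Q$-order of $<_R$ and the $P$-order of $<_L$, and vice versa. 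I then claim that $(X,<_L')+(X,<_R')$ has the same alignment as $(X,<_L)+(X,<_R)$, and that the pair $(<_L',<_R')$ is distinct from both $(<_L,<_R)$ and its trivial switch $(<_R,<_L)$.

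To verify that the alignments coincide, the key observation is that $Q$ is a common initial segment of all four chains, so every prefix of any of them is either contained in $Q$ or contains $Q$. Writing $\mathcal{F}_L^Q$ for the chain of $<_L$-prefixes lying in $Q$, and $\mathcal{F}_L^P\subseteq 2^P$ for the $S$ with $Q\cup S$ a $<_L$-prefix (and analogously for $<_R$, $<_L'$, $<_R'$), a short case analysis of $U\cap V$ with $U\in\mathcal{F}_L$, $V\in\mathcal{F}_R$ yields the decomposition
\[
\mathcal{F}\;=\;\{U\cap V:U\in\mathcal{F}_L^Q,\,V\in\mathcal{F}_R^Q\}\;\cup\;\{Q\cup (S\cap T):S\in\mathcal{F}_L^P,\,T\in\mathcal{F}_R^P\}.
\]
By construction $\mathcal{F}_{L'}^Q=\mathcal{F}_R^Q$, $\mathcal{F}_{R'}^Q=\mathcal{F}_L^Q$, while $\mathcal{F}_{L'}^P=\mathcal{F}_L^P$ and $\mathcal{F}_{R'}^P=\mathcal{F}_R^P$, so applying the same formula to $(<_L',<_R')$ and invoking commutativity of intersection in the first family recovers exactly the same alignment. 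Hence both pairs of chains induce the same closure operator $\phi$.

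The mildly nontrivial step (and the only place a little care is needed) is this $Q$-vs.-$P$ decomposition of the closed sets: the observation that $Q$ is a common prefix of both chains cleanly decouples the $Q$-layer from the $P$-layer, and makes the intersections computable block-by-block. Once this is in place, the separation argument is immediate: $i\neq j$ forces $<_L'\neq <_L$ (they disagree on the first element of $Q$), and $m\neq n$ forces $<_L'\neq <_R$ (their $P$-tails use different permutations of $P$), so $(<_L',<_R')$ is neither $(<_L,<_R)$ nor $(<_R,<_L)$, delivering the second representation of $(X,\phi)$ promised by the lemma.
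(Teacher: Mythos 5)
Your proof is correct and follows the same route the paper itself relies on: the paper states Lemma \ref{suffi} without a formal proof, presenting it as the generalization of Example \ref{switch}, which performs exactly your swap of the $Q$-portions of the two chains while keeping the $P$-portions in place, and your block decomposition of $\mathcal{F}$ into the $Q$-layer and the $P$-layer supplies the verification the paper leaves implicit. One tiny slip: two distinct permutations of $Q$ need not disagree on the \emph{first} element of $Q$, but they must disagree somewhere on $Q$, which is all that is needed to conclude $<_L'\,\neq\,<_L$.
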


\begin{cor}
Under assumptions of the previous Lemma, let $X_p=\{p_1,\dots, p_k\}$ and $X_q=\{q_1,\dots q_s\}$, and let $(X_p,\phi_p)$ and  $(X_q,\phi_q)$ be restrictions of $(X,\phi)$ on subsets $X_p$ and $X_q$, respectively. Then both restrictions have $cdim=2$ and arbitrary representations of $(X,\phi)$ can be obtained by combining one representing chain for $(X_p,\phi_p)$ with one representing chain of $(X_q,\phi_q)$, then second representing chain of $(X_p,\phi_p)$ with second representing chain of $(X_q,\phi_q)$. In particular, if both  $(X_p,\phi_p)$ and $(X_q,\phi_q)$ have unique representations, then $(X,\phi)$ will have two representations. 
\end{cor}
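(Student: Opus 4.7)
The plan is to exploit the structural assumption of Lemma \ref{suffi}: in the given representation, $X_q$ is an initial segment and $X_p$ is a terminal segment of both $<_L$ and $<_R$. I will first argue that this split is forced in every representation of $(X,\phi)$, and then use a restriction argument to decompose the problem into representations of the two sub-geometries.

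For the forcing step, observe that for every $p\in X_p$ and $q\in X_q$ the implication $\{p\}\rto q$ holds in $(X,\phi)$: since $q<_L p$ and $q<_R p$, both linear initial segments ending at $p$ contain $q$, and $\phi(\{p\})$ is their intersection. Consequently, in any representation $(X,\phi)=(X,<'_L)+(X,<'_R)$ we must have $q<'_L p$ and $q<'_R p$ for all such $p,q$, so each of $<'_L,<'_R$ decomposes as a permutation of $X_q$ followed by a permutation of $X_p$.

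For the restriction step, starting from any representation $(X,\phi)=(X,\prec_L)+(X,\prec_R)$ whose chains split this way, a closed set of $(X_p,\phi_p)$ has the form $C\cap X_p$ for $C$ closed in $(X,\phi)$, and $C=A\cap B$ for a $\prec_L$-initial segment $A$ and a $\prec_R$-initial segment $B$; because $X_q$ precedes $X_p$, $A\cap X_p$ and $B\cap X_p$ are either empty or initial segments of the restricted orders $\prec_L|_{X_p},\prec_R|_{X_p}$. This yields $(X_p,\phi_p)=(X_p,\prec_L|_{X_p})+(X_p,\prec_R|_{X_p})$, and the analogous identity holds for $X_q$. Applied to the original representation, together with the hypothesis $m\neq n$ and $i\neq j$, the restricted orders are distinct on each side, so neither restriction is linear and both have $cdim=2$. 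Conversely, given chains $\alpha$ on $X_p$ and $\beta$ on $X_q$ drawn from representations of $(X_p,\phi_p)$ and $(X_q,\phi_q)$, their concatenations produce two chains on $X$ whose join is $(X,\phi)$: by the same case analysis, the closed sets of the join are precisely the closed sets of $(X_q,\phi_q)$ together with the sets $X_q\cup Y$ for $Y$ closed in $(X_p,\phi_p)$, which matches the alignment of $(X,\phi)$.

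Hence every representation of $(X,\phi)$ arises as the pairing of a representation of $(X_p,\phi_p)$ with one of $(X_q,\phi_q)$. When both restrictions have unique representations $\{\alpha_1,\alpha_2\}$ and $\{\beta_1,\beta_2\}$, there are exactly two such pairings up to swapping the two chains of $(X,\phi)$, namely $\{\beta_1\cdot\alpha_1,\beta_2\cdot\alpha_2\}$ and $\{\beta_1\cdot\alpha_2,\beta_2\cdot\alpha_1\}$, which are distinct because $\alpha_1\neq\alpha_2$ and $\beta_1\neq\beta_2$. The main obstacle is the forcing step that $X_q$-before-$X_p$ is structural in every representation; once that is in hand, the rest is bookkeeping using Proposition \ref{COvsA} to pass between closure operators and alignments.
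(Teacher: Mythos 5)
Your argument is correct and follows the same route the paper intends: the paper leaves this corollary unproved, but its surrounding discussion records exactly your key forcing observation (that $u\rightarrow w$ for $u$ in the later block and $w$ in the earlier block pins $X_q$ before $X_p$ in every representation), after which the decomposition into restricted chains and the counting of pairings is the same bookkeeping you carry out. No gaps.
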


Lemma \ref{suffi} describes the sufficient condition for multiple representations of a convex geometry: both chains in representation have ending segments that are identical as sets. We can also prove that such condition is necessary. First, illustrate the method of Lemma \ref{neces} on the following

\begin{exm}\label{unique}
\end{exm}
Suppose a convex geometry on $X=\{1,2,3,4,5\}$ is defined by two chains whose ending segments are distinct: $(5 \ 1\ 3\ 2\ 4\ \nabla \ 2\  1\ 3\ 5\ 4)$. In fact, one only needs to check this for ending segments of length 1,2 and $3=n-2$, where $n=5=|X|$.

By assumption, the ending elements  of two chains are distinct: $5$ and $4$. They are also extreme points of $(X,\phi)$. Without loss of generality, we could assume that $5$ is $<_L$-maximal and $4$ is $<_R$-maximal. Verify that $Ex(X\setminus\{5\}) = \{1,4\}$, where $4$ remains to be extreme point, therefore, $1$ is the second-to-maximal element in $<_L$. Similarly, $Ex(X\setminus\{4\})=\{5\}$, therefore, $5$ is the second-to-maximal element in $<_R$.

On the next step, we want to remove two largest elements in $<_L$ : 5 and 1. Among the  
two largest elements in $<_R$ there should be at least one distinct from 1 and 5, in our case $4$. Therefore, $4$ will be among $Ex(X\setminus\{1,5\})=\{3,4\}$, and the second extreme point $3$ will be the third-to-maximal element in $<_L$. Similarly, removing 5 and 4, which are the two largest elements in $<_R$ will reveal the third-to-maximal in $<_R$ element as $3$. Indeed, $Ex(X\setminus\{1,5\})=\{1,3\}$, where $1$ is the element in the segment of length two in $<_L$ order that is distinct from 4 and 5. This identifies $3$ as the third-to-maximal element in $<_R$. 

On the last step, we find $Ex(X\setminus\{5,1,3\})=\{2,4\}$, which gives the forth-to-maximal element in $<_L$ as $2$, and $Ex(X\setminus\{3,5,4\})=\{1\}$, which gives the forth-to-maximal element in $<_R$ as $1$. Since 4 elements of both chains are identified, the minimal element in each is uniquely determined as well.


\begin{lem}\label{neces}
Suppose  $(X, \phi)=(X,<_L)+ (X, <_R)$ is a representation of convex geometry of $cdim=2$ and $|X|=n$. If, for every $1\leq k \leq n-2$ the ending segments of $k$ elements of chains $(X,<_L)$ and $(X,<_R)$ are distinct as sets, then $(X,\phi)$ has a unique representation.
\end{lem}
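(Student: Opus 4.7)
The plan is to mimic the element-by-element identification illustrated in Example \ref{unique}, arguing by induction on $k$ that in any representation $(X,\phi)=(X,<_L)+(X,<_R)$ the $k$-th largest element $\ell_k$ of $<_L$ and the $k$-th largest element $r_k$ of $<_R$ are uniquely determined by $\phi$, modulo the trivial swap of the two chains. Write $L_k=\{\ell_1,\dots,\ell_k\}$ and $R_k=\{r_1,\dots,r_k\}$ for the ending segments as sets, so that the hypothesis reads $L_k\neq R_k$ for $1\leq k\leq n-2$. The crucial structural fact is that for any $Y\subseteq X$ the restriction $(X\setminus Y,\phi_{X\setminus Y})$ is itself a convex geometry of $cdim\leq 2$ represented by the restricted chains $<_L|_{X\setminus Y}$ and $<_R|_{X\setminus Y}$; its extreme-point set therefore equals the (one- or two-element) set of tops of these restricted chains, because $x$ is extreme in the restriction exactly when $X\setminus(Y\cup\{x\})$ is closed, which in the join representation forces $x$ to be the top of one of the two restricted chains.

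For the base case $k=1$, taking $Y=\emptyset$, the hypothesis $L_1\neq R_1$ yields $Ex(X)=L_1\cup R_1$ with exactly two distinct elements; we designate one as $\ell_1$ and the other as $r_1$, the two possible choices accounting precisely for the trivial swap. For the inductive step, assume the ordered sequences $\ell_1,\dots,\ell_{k-1}$ and $r_1,\dots,r_{k-1}$ have been identified for some $2\leq k\leq n-1$. Applying the structural fact with $Y=L_{k-1}$ gives $Ex(X\setminus L_{k-1})=\{\ell_k,r^\ast\}$, where $r^\ast=r_j$ for the smallest index $j$ with $r_j\notin L_{k-1}$. Because $|L_{k-1}|=|R_{k-1}|=k-1$ and $L_{k-1}\neq R_{k-1}$ by hypothesis (valid since $k-1\leq n-2$), such $j\leq k-1$ must exist, so $r^\ast$ already appears in the identified list $r_1,\dots,r_{k-1}$ and is computable from the inductive data. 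The element $\ell_k$ is then pinned down either as the unique member of $Ex(X\setminus L_{k-1})\setminus\{r^\ast\}$, or as $r^\ast$ itself when the extreme-point set is a singleton. A symmetric computation on $Ex(X\setminus R_{k-1})$ identifies $r_k$. The final case $k=n$ needs no further hypothesis: $\ell_n$ is the unique element of $X\setminus L_{n-1}$, and similarly for $r_n$.

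The one delicate point, and the place where the distinctness hypothesis is actually used, is the identification of $r^\ast$ among the at most two candidates in $Ex(X\setminus L_{k-1})$. If $L_{k-1}=R_{k-1}$ were permitted at some $k\leq n-1$, both extreme points of the restriction could lie outside the already-identified initial segments $L_{k-1}\cup R_{k-1}$, and nothing in the inductive data would distinguish $\ell_k$ from $r^\ast$. The assumption $L_k\neq R_k$ for $1\leq k\leq n-2$ rules out exactly this obstruction across the nontrivial range of the induction, while $k=1$ and $k=n$ are handled by direct arguments. It follows that any representation of $(X,\phi)$ by two linear sub-geometries must coincide with the one reconstructed above, up to the swap fixed in the base case, which is the claimed uniqueness.
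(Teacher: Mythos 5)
Your proposal is correct and follows essentially the same route as the paper's proof: both reconstruct the two chains from the top down by computing extreme points of the restrictions obtained by deleting an already-identified ending segment of one chain, and both use the distinct-ending-segments hypothesis to guarantee that one of the (at most two) extreme points of each such restriction is an already-identified element of the other chain, so the new element is pinned down. Your write-up actually makes explicit the structural fact (extreme points of a restriction are exactly the tops of the two restricted chains) that the paper uses implicitly, but the argument is the same.
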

\begin{proof}
We will show that, under the assumption of Lemma, the orders of both representing chains are uniquely determined by the knowledge of extreme points of various subsets of $X$, and the latter is independent of any representation.

Let $Ex(X)=\{x_1,x_2\}$, and $x_1\not = x_2$ by assumption of Lemma.
Without loss of generality, we can assume that $x_1$ is the $<_L$-maximal element and $x_2$ is $<_R$-maximal element. 

Now consider $X_1=X\setminus x_1$. Then we have  $Ex(X_1)=\{x_2,x_3\}$, where $x_2,x_3$ may or may not be equal. Indeed, if $x_2 \in Ex(X)$, then it remains to be in $Ex(X_1)$, by Corollary \ref{ex}. 
We observe that $x_3$ must be the second largest element in $<_L$ order. 
Similarly, wet take $X_2=X\setminus x_2$. Then $Ex(X_2)=\{x_1,x_4\}$, where $x_1,x_4$ may or may not be distinct. This determines $x_4$ as the second largest element in the right chain, for any representation.

Thus, we may assume that we were able to determine uniquely the two largest elements of both chains, up to the switch of two chains.

Now assume that we were able to determine $k$ largest elements of both chains:\\$(x_1 \  x_3 \dots \ x_{2k-1}\nabla x_{2k}\ \dots x_4 \ x_2)$, for some $1\leq k \leq \lfloor (n-2)/2\rfloor $. By assumption, one of elements, say, $x_{2s-1}$ on the left is distinct from those on the right, and we may assume that it is the largest among those that do not appear on the right. Take $X_{2k+2}=X\setminus \{x_2,x_4,\dots x_{2k}\}$. Then $Ex(X_{2k+2})=\{x_{2s-1}, x_{2k+2}\}$, where two points may or may not be equal. This determines that $x_{2k+2}$ is the largest element of $X_{2k+2}$ on the right. Similarly, we could determine what is the largest element on the left, considering $Ex(X\setminus\{x_1,\dots x_{2k-1}\})$.

We could proceed with this argument up to $k=\lfloor (n-2)/2\rfloor$, thus, determining, the second points on the left and on the right. Therefore, the first points will be determined uniquely also.
\end{proof}

The following statement will play the role in the proof of the main result of the next section. Note that we use a slightly stronger assumption on convex geometry than in the previous Lemma. In particular, every geometry in the Lemma below will have a unique representation.

\begin{lem}\label{seq}
Suppose  $(X, \phi)=(X,<_L)+ (X, <_R)$ is a convex geometry of $cdim=2$ with exactly two extreme points, and $|X|=n$. If, for every $1\leq k \leq n-1$ the ending segments of $k$ elements of chain $(X,<_L)$ and $(X,<_R)$ are distinct as sets, then elements of $X$ can be ordered $x_1,x_2,\dots, x_n$ in such a way that the following holds:
\begin{itemize}
    \item [(1)] $x_1$ is an extreme point of $(X, \phi)$;
    \item[(2)] $x_i$ is an extreme point of $(X\setminus\{x_1,\dots, x_{i-1}\}, \phi)$, for all $i=2,\dots, n-1$;
    \item [(3)] $(X\setminus\{x_1,\dots, x_{i-1}\},\phi)$ has exactly two extreme points, for all $i=2,\dots, n-1$.
\end{itemize}
\end{lem}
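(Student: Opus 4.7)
The plan is to proceed by induction on $n = |X|$. The base case $n = 2$ is immediate, since both elements are extreme and conditions (2)--(3) are vacuous. For the inductive step, enumerate the two chains top-down as $a_1, \ldots, a_n$ for $<_L$ and $b_1, \ldots, b_n$ for $<_R$. Because the closed sets of $(X,\phi)$ are exactly the intersections $U \cap V$ with $U$ an initial segment (from the bottom) of $<_L$ and $V$ of $<_R$, a direct check shows that $X \setminus \{x\}$ is closed iff $x$ is the top of $<_L$ or of $<_R$; hence $Ex(X,\phi) = \{a_1, b_1\}$, and $a_1 \neq b_1$ by the hypothesis at $k = 1$. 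Let $j_a$ be the position of $a_1$ in $<_R$ (so $a_1 = b_{j_a}$) and $j_b$ the position of $b_1$ in $<_L$. Without loss of generality $j_a \leq j_b$, and the first element chosen is $x_1 := a_1$.

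The main step will be to verify that the restriction $(X \setminus \{x_1\}, \phi')$ again satisfies the hypothesis of the lemma on $n - 1$ elements: that for every $k = 1, \ldots, n-2$, the top $k$ elements of $<_L'$ and of $<_R'$ are distinct as sets. Restriction commutes with the join of linear sub-geometries, so the restricted chains are $<_L' : a_2, \ldots, a_n$ and $<_R' : b_1, \ldots, b_{j_a - 1}, b_{j_a + 1}, \ldots, b_n$. Thus the top $k$ elements of $<_L'$ form $\{a_2, \ldots, a_{k+1}\}$, while those of $<_R'$ form $\{b_1, \ldots, b_k\}$ when $k < j_a$, and $\{b_1, \ldots, b_{k+1}\} \setminus \{a_1\}$ when $k \geq j_a$.

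Assume toward a contradiction that these two sets agree for some $k \leq n - 2$. When $k \geq j_a$, adjoining $a_1$ to both sides recovers $\{a_1, \ldots, a_{k+1}\} = \{b_1, \ldots, b_{k+1}\}$, contradicting the hypothesis at the next level $k + 1 \leq n - 1$. When $k < j_a$, equality forces $b_1 \in \{a_2, \ldots, a_{k+1}\}$, and hence $j_b \leq k + 1$; combined with $j_a \leq j_b$ and $j_a > k$ this forces $j_a = j_b = k + 1$, so $a_{k+1} = b_1$ and $b_{k+1} = a_1$. Cancelling the common element $b_1$ from the assumed equality yields $\{a_2, \ldots, a_k\} = \{b_2, \ldots, b_k\}$, and reassembling gives $\{a_1, \ldots, a_{k+1}\} = \{a_1, b_1\} \cup \{a_2, \ldots, a_k\} = \{a_1, b_1\} \cup \{b_2, \ldots, b_k\} = \{b_1, \ldots, b_{k+1}\}$, once again contradicting the hypothesis at $k + 1$.

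It then follows that the restriction meets the hypothesis on $n - 1$ elements; in particular its top $1$ sets differ, so the restriction has exactly two extreme points and $cdim = 2$. The induction hypothesis produces an ordering $x_2, \ldots, x_n$ of $X \setminus \{x_1\}$ satisfying (1)--(3) there, and prepending $x_1$ yields the desired ordering on $X$. The expected obstacle in this plan is the delicate sub-case $j_a = j_b = k + 1$ within the range $k < j_a$: it does not collapse to the original hypothesis in one step and requires the explicit cancellation of the shared element $b_1$ before the hypothesis at the next level $k + 1$ can be invoked.
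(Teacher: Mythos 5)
Your argument is correct and follows the same strategy as the paper's proof: induct on $|X|$ by deleting one of the two extreme points and checking that the distinct-ending-segments hypothesis (ES) survives the deletion, then prepend the deleted point to the ordering supplied by the inductive hypothesis. The only real difference is that the paper establishes the key claim by contradiction (``at least one of the two deletions preserves (ES)''), whereas you identify the good deletion explicitly via the positions $j_a \le j_b$ and verify it by a direct case analysis on $k < j_a$ versus $k \ge j_a$ --- a more constructive route to the same step.
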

\begin{proof}
Let us call (ES) the property about distinct ending segments of two chains in the formulation of Lemma. Suppose $a\not= b$ are two extreme points of $(X,\phi)$, and without loss of generality we assume that $a$ is $<_L$-maximal element and $b$ is $<_R$-maximal. 

We claim  that either removing $a$ or removing $b$ one obtains two chains:\\ $(X\setminus \{t\} , <_L)$, $(X\setminus \{t\}, <_R)$, $t\in \{a,b\}$, with property (ES). If we prove the claim, then by induction we can find a sequence of removed extreme points satisfying  statements (1),(2),(3).

Suppose this is not the case, and  $(X\setminus \{t\}, <_L)$ and  $(X\setminus \{t\}, <_R)$ have equal ending segments, for $t=a$ and for $t=b$. Then chains of $X\setminus \{a\}$ will look like $(S_b^* S_{\neg b}^*\nabla S_{\neg b}S_b)$, where $S^*_b, S_b$ are ending sectors of chains $(X\setminus \{a\},<_L), (X\setminus \{a\}, <_R)$, and these sectors are equal as sets. Subscript $b$ indicates that $b \in S_b^*,S_b$. Respectively, $b$ does not appear in sectors $S_{\neg b}^*, S_{\neg b}$. 

Similarly, the chains of $X\setminus \{b\}$ will look like $(S_a^* S_{\neg a}^*\nabla S_{\neg a}S_a)$, where $S^*_a, S_a$ are ending sectors of chains $(X\setminus \{b\},<_L), (X\setminus \{b\}, <_R)$, and these sectors are equal as sets. Without loss of generality we may assume that $|S_{\neg b}|\leq |S_{\neg a}|$, which means that $S_{\neg b}$ is a subsequence in $S_{\neg a}$. In particular, $S_{\neg b}$ does not have both $a$ and $b$, and, similarly, $S_{\neg b}^*$, which is a permutation of $S_{\neg b}$.  This will imply that $S_{\neg b}^*, S_{\neg b}$ are initial segments of original chains $(X,<_L), (X, <_R)$ and they coincide as sets, leading to the fact that the ending segments of two chains equal as sets as well, which contradicts to the assumption of Lemma.    
\end{proof}

Combining Lemmas \ref{suffi} and \ref{neces}, we can formulate the statement about unique representation.

\begin{thm}\label{unique-rep}
Convex geometry $(X,\phi)=(X,<_L)+ (X, <_R)$ of $cdim=2$ has a unique representation iff there exists $X'\subseteq X$ such that the restriction $(X', \phi')$ satisfies property of Lemma \ref{neces} and elements of $X'$ fill the same segments of two representing chains, while restriction on $X\setminus X'$ has $cdim=1$. 
\end{thm}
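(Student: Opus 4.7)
The plan is to introduce a representation-independent ``backbone'' of $(X,\phi)$ and reduce both directions of the theorem to tracking it.

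\textbf{Step 1 (intrinsic backbone).} Call a closed set $C$ of $\phi$ \emph{super-closed} if $C$ is comparable under $\subseteq$ with every closed set of $\phi$. The first task is to show that in any representation $(X,\phi)=(X,<_L)+(X,<_R)$, the super-closed sets coincide with $\mathcal{F}_L\cap\mathcal{F}_R$. For $(\supseteq)$, every closed set has the form $D_L\cap D_R$ with $D_L\in\mathcal{F}_L$, $D_R\in\mathcal{F}_R$, and comparability of $C$ with $D_L$ and $D_R$ in their respective chains together with a short four-case split gives $C\subseteq D_L\cap D_R$ or $D_L\cap D_R\subseteq C$. For $(\subseteq)$, the chain $\mathcal{F}_L$ is saturated with a closed set of every size $0,1,\dots,|X|$, so comparability of a super-closed $C$ with every member forces $C\in\mathcal{F}_L$; likewise $C\in\mathcal{F}_R$. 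Consequently the chain of super-closed sets $\emptyset=C_0\subset C_1\subset\cdots\subset C_m=X$ is intrinsic, as are the \emph{layers} $L_i:=C_i\setminus C_{i-1}$; in any representation, $L_i$ fills the same block of positions $|C_{i-1}|+1,\dots,|C_i|$ in both chains.

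\textbf{Step 2 (necessity).} Assume $(X,\phi)$ has a unique representation. I claim at most one layer has $cdim=2$: if $L_i,L_j$ with $i<j$ both had distinct chain restrictions, then setting $Q=C_{j-1}$ and $P=X\setminus C_{j-1}$ yields common initial/final segments of both chains, with distinct permutations on $Q$ via $L_i\subseteq Q$ and on $P$ via $L_j\subseteq P$, so Lemma~\ref{suffi} would produce a second representation. Since $(X,\phi)$ has $cdim=2$, at least one layer has $cdim=2$, giving a unique such layer $X':=L_j$. Then $X'$ fills the same positional segment in both chains; $X\setminus X'$ is a disjoint union of $cdim=1$ layers stacked identically, hence has $cdim=1$; and because no super-closed set of $\phi$ lies strictly between $C_{j-1}$ and $C_j$, the restriction $(X',\phi|_{X'})$ has only the trivial super-closed sets $\emptyset$ and $X'$---equivalent (via Step~1 applied inside $X'$) to the ending segments of the two chains on $X'$ being distinct as sets for every $1\leq k\leq|X'|-1$, hence the hypothesis of Lemma~\ref{neces}.

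\textbf{Step 3 (sufficiency).} Given $X'$ satisfying the stated conditions in a representation $(<_L,<_R)$, write $A$ and $B$ for the elements of $X\setminus X'$ in positions strictly below and strictly above the block of $X'$. Then $A$ and $A\cup X'$ are initial segments of both chains, hence lie in $\mathcal{F}_L\cap\mathcal{F}_R$ and are super-closed by Step~1. Super-closedness is intrinsic, so in any other representation $(<_{L'},<_{R'})$ the sets $A$ and $A\cup X'$ remain in $\mathcal{F}_{L'}\cap\mathcal{F}_{R'}$, which forces $X'$ to fill the same positional block in both new chains. Lemma~\ref{neces} applied to $(X',\phi|_{X'})$ then determines $\{<_{L'}|_{X'},<_{R'}|_{X'}\}=\{<_L|_{X'},<_R|_{X'}\}$ up to swap, while $cdim(X\setminus X',\phi|_{X\setminus X'})=1$ forces $<_{L'}|_A=<_{R'}|_A$ and $<_{L'}|_B=<_{R'}|_B$ to coincide with the unique chain of the respective $cdim=1$ restriction. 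Concatenating $A$, $X'$, $B$ in the dictated order in both chains recovers the original representation up to swap.

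The main obstacle I expect is Step~1: once the algebraic notion of ``super-closed set'' is identified with the representation-dependent object $\mathcal{F}_L\cap\mathcal{F}_R$, the layer decomposition becomes intrinsic and both directions reduce to assembling Lemmas~\ref{suffi} and~\ref{neces} with elementary positional bookkeeping. A secondary subtlety in Step~2 is ruling out any nontrivial super-closed set inside the layer $X'$, which reduces to the observation that such a set, combined with $C_{j-1}$, would give a super-closed set of $\phi$ strictly between the consecutive $C_{j-1}$ and $C_j$.
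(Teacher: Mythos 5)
Your proof is correct, and it is worth noting that the paper itself offers no actual proof of Theorem \ref{unique-rep}: it only remarks that the statement is obtained by ``combining Lemmas \ref{suffi} and \ref{neces}'' and then sketches, informally and without justification, a partition $X=P_1\cup\dots\cup P_n$ into blocks occupying identical positions in both chains. Your argument follows the same outline (Lemma \ref{suffi} drives the necessity direction via the contrapositive, Lemma \ref{neces} drives sufficiency), but you supply the one ingredient the paper is missing: a representation-independent description of the block structure, namely the identification of $\mathcal{F}_L\cap\mathcal{F}_R$ with the closed sets comparable to every closed set (``super-closed'' sets). That lemma is what legitimizes phrases like ``elements of $X'$ fill the same segments of two representing chains'' as an intrinsic property of $(X,\phi)$ rather than of one chosen representation, and it is exactly what is needed in Step 3 to argue that $A$ and $A\cup X'$ remain common initial segments in \emph{every} representation, and in Step 2 to see that consecutive super-closed sets bound a layer on which the two induced orders share no proper nonempty initial segment (hence the ending-segment condition of Lemma \ref{neces}, in fact the stronger version with $1\le k\le |X'|-1$). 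The four-case comparability check and the observation that $\mathcal{F}_L$ contains a closed set of each cardinality are both sound. In short: same strategy as the paper intends, but your super-closed-set characterization turns the paper's heuristic block picture into a proof; it would be a worthwhile addition to the text.
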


\begin{exm}
\end{exm} To illustrate the statement of Theorem \ref{unique-rep}, consider
$X=\{a,b,c,d,1,2,3\}$, and the geometry on this set which given by segments on the line:
\[
(3 \ d \ b \ a\ c\ 2 \ 1\  \nabla \ 1\ 2\ d \ c\ b\ a\ 3). 
\]
Observe that the restriction on $X'=\{a,b,c,d\}$ has the property of Lemma \ref{neces}: the 2-element segments at the max-end of each chain are distinct subsets of $X'$, and all elements of $X'$ populate the same segment from third to sixth element of each chain. The projection on $X\setminus X'$ gives two identical sub-chains, thus, the restriction $(X\setminus X', \phi)$ has $cdim=1$. \\

We can briefly discuss how the general convex geometry on set $X$ of $cdim=2$ can be envisioned, with respect to its (possibly, multiple) representations. There is a partition $X=P_1\cup P_2\dots \cup P_n$ of base set, and segment representation scheme, where subsets of elements are placed rather than individual elements:\\ $(P_n\  \dots \ P_2 \ P_1\  \nabla \ P_1\ P_2\ \dots P_n)$. Projection of geometry on each $P_i$ has a unique representation, and multiple representations can be obtained by switching left and right chains of individual segments $P_i$.

We note, in particular, that $u\rightarrow w$, for every $u \in P_k$ and $w \in P_s$, where $k>s$. 

\section{2-Carath\'eodory property and the others}\label{2Car}

In this section we discuss the 2-Carath\'eodory property and its stronger and weaker versions.

\begin{df}\label{Car1}
For a closure space $(X,\phi)$ the $n$-Carath\'eodory property holds, if $a \in \phi(X')$, for some $X'\subseteq X, a\in X$, implies that $a\in \phi(x_1,\dots x_{n})$, for some $x_1,\dots x_{n} \in X'$.
\end{df}

It is well-known that if $X=\mathbb{R}^{n-1}$ and $\phi$ is the convex hull operator, then the space $(X,\phi)$ satisfies the $n$-Carath\'eodory property. In particular, $2$-Carath\'eodory property holds for a convex hull operator in one-dimensional space.

We will consider two more properties formulated in the same vein. 

\begin{df}\label{2impl}
For a closure space $(X,\phi)$ the ($2$Impl) property holds, if there exists an implication basis $\Sigma \{A_i\rightarrow B_i:i\leq k\}$ for $(X,\phi)$ that satisfies $|A_i|\leq 2$, for all $i\leq k$.
\end{df}

\begin{df}\label{2Ex}
Closure space $(X,\phi)$ satisfies property (2Ex), if, for every $X'\subseteq X$, $|Ex(X')|\leq 2$.  
\end{df}

\begin{prop}\label{2and1}
For any closure space $(X,\phi)$ the following hold:
\[
\text{(2Ex) } \Longrightarrow (2-\text{Carath\'eodory }) \Longrightarrow  (2\text{Impl}).
\]
\end{prop}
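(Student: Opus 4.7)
The plan is to prove the two implications independently, using essentially the same philosophy for both: condition (2Ex) lets us shrink any generating set down to a $2$-element subset, and $2$-Carath\'eodory then packages that reduction into an implicational basis.

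For (2Ex) $\Longrightarrow$ ($2$-Carath\'eodory), I would fix $a\in\phi(X')$ and pick $Y\subseteq X'$ \emph{minimal} with $a\in\phi(Y)$. The key observation is that minimality forces every $y\in Y$ to be an extreme point of the restriction $(Y,\phi_Y)$: otherwise $y\in\phi(Y\setminus\{y\})$ would give $\phi(Y)=\phi(Y\setminus\{y\})$ and hence $a\in\phi(Y\setminus\{y\})$, contradicting the choice of $Y$. So $Y=Ex(Y)$, and (2Ex) applied to the restriction $(Y,\phi_Y)$ yields $|Y|\le 2$. Writing $Y=\{x_1,x_2\}$ (with the convention $x_2=x_1$ if $|Y|=1$) gives $a\in\phi(\{x_1,x_2\})$ with $x_1,x_2\in X'$, as required.

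For ($2$-Carath\'eodory) $\Longrightarrow$ ($2$Impl), I would directly exhibit a basis of the desired shape, namely
\[
\Sigma=\{A\rightarrow b : A\subseteq X,\ |A|\le 2,\ b\in\phi(A)\}.
\]
Every implication in $\Sigma$ holds in $(X,\phi)$ by construction, so $\Sigma(Y)\subseteq\phi(Y)$ for every $Y$. Conversely, if $b\in\phi(Y)$, the $2$-Carath\'eodory property supplies $x_1,x_2\in Y$ with $b\in\phi(\{x_1,x_2\})$; the implication $\{x_1,x_2\}\rightarrow b$ lies in $\Sigma$ and fires at the first step of the $\Sigma$-closure computation started from $Y$, so $b\in\Sigma(Y)$. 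Since all left-hand sides have cardinality at most $2$, $\Sigma$ witnesses (2Impl).

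The only real friction is an edge case around $\phi(\emptyset)$: a literal reading of the definition of $2$-Carath\'eodory does not cover $X'=\emptyset$, and if $\phi(\emptyset)\neq\emptyset$ one must supplement $\Sigma$ with implications $\emptyset\rightarrow b$ for $b\in\phi(\emptyset)$. In the convex-geometry setting in which this proposition will be used, $\phi(\emptyset)=\emptyset$, so this issue vanishes. Beyond that, the arguments are short, and the only non-bookkeeping step is the extreme-points reduction in the first implication.
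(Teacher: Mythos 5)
Your proof is correct, but it reaches the first implication by a different route than the paper. For (2Ex) $\Rightarrow$ ($2$-Carath\'eodory), the paper applies (2Ex) to the closed set $\phi(X')$ and then invokes the Edelman--Jamison characterization (Theorem \ref{extp}) to conclude that $\phi(X')$ is generated by its at most two extreme points; this quietly uses the convex-geometry hypothesis, even though the proposition is stated for an arbitrary closure space. Your minimal-generating-set argument --- shrink $Y\subseteq X'$ to a minimal set with $a\in\phi(Y)$, observe that minimality forces $Y=Ex(Y)$, and apply (2Ex) to the restriction --- avoids Theorem \ref{extp} entirely and is valid for any closure space (modulo the $\phi(\emptyset)$ caveat you correctly flag, which is vacuous for convex geometries where $\phi(\emptyset)=\emptyset$). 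So your version buys a strictly more elementary and more general proof, at the cost of a slightly longer argument. For the second implication the two proofs differ only cosmetically: the paper starts from an arbitrary basis $\Sigma$ and replaces each premise by a two-element subset supplied by Carath\'eodory, whereas you directly write down the full canonical basis of all valid implications with premises of size at most two; both verifications are immediate, and your explicit $\Sigma$ already contains the $\emptyset\rightarrow b$ implications you mention, so no supplement is actually needed.
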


\begin{proof}
Suppose $(X,\phi)$ satisfies (2Ex), and let $a \in \phi(X')$, for some $X'\subseteq X, a\in X$. By assumption, $\phi(X')$ has at most two extreme points, say, $Ex(\phi(X'))=\{x_1,x_2\}$, where $x_1,x_2 \in X'$. Due to Theorem \ref{extp}, every closed set of convex geometry is generated by its extreme points. Therefore, $a \in \phi(x_1,x_2)$, and 2-Carath\'eodory holds.

Now suppose 2-Carath\'eodory holds in $(X,\phi)$. Consider any implicational basis $\Sigma$ for $(X,\phi)$. For every implication $X'\rightarrow a$ in $\Sigma$, consider $x_1,x_2 \in X'$ such that $x_1x_2\rightarrow a$, due to Carath\'eodory property. This last implication follows from $\Sigma$. On the other hand, $X'\rightarrow a$ follows from $x_1x_2\rightarrow a$. Therefore, collecting shorter implications into set $\Sigma'$ we obtain a new basis of $(X,\phi)$, thus, (2Impl) holds.
\end{proof}

None of the implications of Proposition \ref{2and1} can be reversed.\\

(1) Closure space may satisfy 2-Carath\'eodory property, but not (2Ex).

\begin{exm}
\end{exm}

Consider an example of affine convex geometry $\Co(\mathbb{R}^2,X)$ (see Definition \ref{affine}), where $X=\{a,b,c,x\}$ is a set of points on a plane, points $a,b,c$ are not on a line and $x$ is on the segment $[a,b]$. Apparently, 2-Carath\'eodory property holds. On the other hand, set $X$ has three extreme points $a,b,c$.\\

(2)  Closure space may satisfy (2Impl), but not 2-Carath\'eodory property.

\begin{exm}
\end{exm}
Consider an example of affine convex geometry $\Co(\mathbb{R}^2,X)$, where $X=\{a,b,c,d,x\}$ is a set of points on a plane, points $a,b,c$ are not on a line, $d$ in on segment $[b,c]$ and $x$ is on the segment $[a,d]$.

There exists basis $\Sigma=\{bc\rightarrow d, ad\rightarrow x\}$ for $\Co(\mathbb{R}^2,X)$ that satisfies (2Impl). On the other hand, $x \in \Co(a,b,c)$, but not in $\Co(u,v)$, for any $u,v \in \{a,b,c\}$.\\

The following statement strengthens \cite[Proposition 3.8]{C14}
\begin{lem}
If $(X,\phi)$ is a convex geometry with $cdim=2$, then it satisfies (2Ex).
\end{lem}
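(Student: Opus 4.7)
The plan is to unpack the definition of $cdim=2$ directly. By Definition \ref{Mon}, there exist two strict linear orders $<_L$ and $<_R$ on $X$ with $(X,\phi)=(X,<_L)+(X,<_R)$. The main structural observation, which is immediate from Definition \ref{mon} together with the formula defining the join of alignments, is that the closed sets of $(X,\phi)$ are exactly the intersections $U\cap V$ where $U$ is a $<_L$-initial segment of $X$ and $V$ is a $<_R$-initial segment of $X$.

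Fix any $X'\subseteq X$ and let $a$, $b$ denote the $<_L$-maximum and the $<_R$-maximum of $X'$, respectively; these exist uniquely since $<_L$, $<_R$ are strict linear orders. Using the observation above, $\phi(X')$ is the smallest closed set containing $X'$, and a direct check shows it equals $\{y\in X: y\leq_L a\}\cap\{y\in X:y\leq_R b\}$. In particular $X'\subseteq\phi(X')$ sits inside the intersection of these two initial segments.

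The next step is to verify that every $x\in X'\setminus\{a,b\}$ satisfies $x\in\phi(X'\setminus\{x\})$, so that $x$ is not extreme in the restriction $(X',\phi_{X'})$. Indeed $x<_L a$ and $x<_R b$, so removing $x$ from $X'$ leaves both the $<_L$-maximum and the $<_R$-maximum unchanged, and consequently $\phi(X'\setminus\{x\})=\phi(X')\ni x$. Hence $Ex(X')\subseteq\{a,b\}$ and $|Ex(X')|\leq 2$.

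The argument is essentially mechanical once the description of the closed sets of $(X,\phi)$ as intersections of two linear ``down-sets'' is in hand; there is no serious obstacle. The only small care needed is that $Ex(X')$ refers to the extreme points of the restricted convex geometry $(X',\phi_{X'})$, but the equality $\phi_{X'}(X'\setminus\{x\})=\phi(X'\setminus\{x\})\cap X'$ immediately reduces the question to membership in $\phi(X'\setminus\{x\})$, which the description above handles. One could alternatively deduce the result from the segment representation of Theorem \ref{segments}, identifying the two candidate extreme points as the segments with smallest left endpoint and largest right endpoint, but the order-theoretic formulation is the shortest route.
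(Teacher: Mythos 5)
Your proof is correct. The paper disposes of this lemma in one line by citing Theorem \ref{segments}: in the segment representation, the only possible extreme elements of a subfamily of segments are the one reaching furthest left and the one reaching furthest right. You instead work directly from Definition \ref{Mon} and the join-of-alignments formula: the closed sets are exactly $U\cap V$ with $U$ a $<_L$-initial segment and $V$ a $<_R$-initial segment, so $\phi(X')$ is determined by the $<_L$-maximum $a$ and the $<_R$-maximum $b$ of $X'$, and deleting any other $x\in X'$ leaves both maxima, hence $\phi(X'\setminus\{x\})=\phi(X')\ni x$ and $Ex(X')\subseteq\{a,b\}$. The two arguments are the same idea in different clothing (the segment representation is literally built from the two chains), but your route is more self-contained: it avoids relying on Theorem \ref{segments}, whose proof in the paper is itself only sketched, and it supplies the verification that the paper's one-line proof leaves implicit, including the correct handling of $Ex(X')$ as extreme points of the restriction $(X',\phi_{X'})$. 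The degenerate cases ($a=b$, or $|X'|\leq 1$) are harmless and your bound $|Ex(X')|\leq 2$ still holds there.
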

\begin{proof}
This follows from Theorem \ref{segments}.
\end{proof}

While (2Ex) is a necessary condition for the convex geometries of $cdim=2$, it is not sufficient.

\begin{exm}\label{notsuf}
\end{exm}

Consider convex geometry $(X,\phi)$ with $X=\{a,b,c,d\}$, defined by the set of implications: $ab \rightarrow c$, $bc\rightarrow d$, $a\rightarrow d$.

\begin{figure}[H]
 \includegraphics[width=0.5\textwidth]{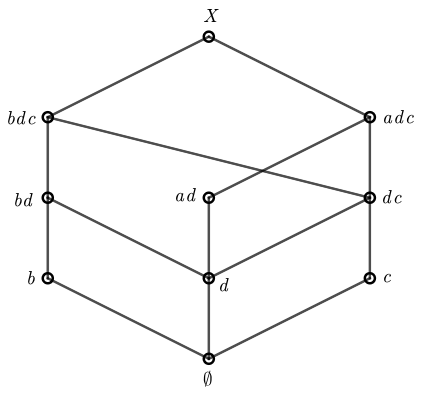}
 \caption{}
 \label{fig:four}
 \end{figure}

The Hasse diagram of this geometry is shown on Figure \ref{fig:four}. It is straightforward to verify that every closed set has no more than 2 extreme points.
On the other hand, there is no two chains from $\emptyset$ to $X$ that generate all closed sets of this convex geometry using $\cap$ : closed sets $\{b,d\}, \{a,d\}$ and $\{c\}$ are incomparable and meet-irreducible, so they all have to be in distinct chains. 

This example shows that one needs additional property to make characterization of closure operator of convex geometry with $cdim=2$.

\section{Main Result}\label{MT}

In this section we prove our main result, and we start by introducing a new property and discussing its equivalent formulation.

\begin{df}
Closure space $(X,\phi)$ satisfies property (Sq), or the Square Property, if, for any $X'\subseteq X$, $Ex(X')=\{a,b\}, Ex(X'\setminus a)=\{c,b\}$ and $Ex(X'\setminus \{a,b\})=\{c,d\}$, where $a\not = b\not = c$, imply $Ex(X'\setminus b)=\{a,d\}$ or $Ex(X'\setminus b)=\{a\}$.
\end{df}

 \begin{figure}[H]
 \includegraphics[width=0.5\textwidth]{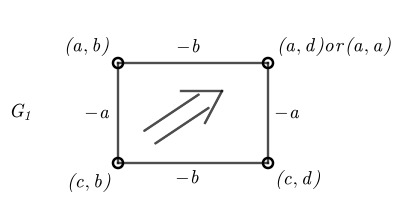}
 \caption{Square Property}
 \label{fig:Sq}
 \end{figure}

The property is presented on Figure \ref{fig:Sq}, where the ordered pair $(a,b)$ at the upper left corner includes both (distinct) extreme points of sub-geometry $X'$. Moving horizontally or vertically  gives a restriction of geometry onto a subset, obtained by removing one or two extreme points. Which of points is removed is indicated by marking the edge of a square. Extreme points are shown as ordered pairs indicating that the vertical action of removing $a$ changes the first component of an ordered pair, and horizontal action of removing $b$ changes the second component of the ordered pair.  The assumption of the property is on two edges connecting at pair $(c,b)$: $a\not = b\not = c$, then the conclusion is indicated by a large arrow inside the square on what should be expected at the upper right corner.

Note that the case $(a,a)$ at the upper right corner implies that $a \rightarrow g$, for every $g \in X_1\setminus\{b\}$. In particular, $a\rightarrow c,d$.

\begin{exm}\label{notsuf2}
\end{exm}

Geometry $(X,\phi)$ from Example \ref{notsuf} fails (Sq). Indeed, taking $X'=X$ with same elements as on Figure \ref{fig:Sq}, we have $Ex(X')=\{a,b\}, Ex(X'\setminus a)=\{c,b\}$ and $Ex(X'\setminus \{a,b\})=\{c,d\}$. On the other hand, $X\setminus \{b\}=\{a,d,c\}$, therefore, $Ex(X'\setminus b)=\{a,c\}$, which contradicts the conclusion of (Sq).\\

We now formulate a similar property. 
We will use notation $a\not \rightarrow  z$ for $z \not \in \phi(a)$.

\begin{df}
Closure space $(X,\phi)$ satisfies property (ExR), if for every $X'\subseteq X$ with $Ex(X')=\{a,b\}$ and $Ex(X'\setminus \{a\})=\{c,b\}$, the assumptions [$a \not \rightarrow z$ and ($a\rightarrow y$ or $az\rightarrow y$)] imply [$cz\rightarrow y$], for all $y,z \in X'\setminus \{a\}$ . 
\end{df}

This property is saying that when element $c$ 'replaces' $a$ as an extreme point, after $a$ is removed, $c$ also 'replaces' $a$ in some implications.

Note that the property indirectly assumes that $a\not = b$, because, otherwise, $b \not \in X'\setminus \{a\}$. Moreover, there will be no $z \in X'$ with $a\not \rightarrow z$. Also, if $b=c$, then property always holds, because conclusion $cz\rightarrow y$ would follow from $Ex(X'\setminus \{a\})=\{b\}=\{c\}$. Therefore, one can always assume $a\not = b\not = c$ in (ExR).

\begin{lem}\label{ExR}
Let $(X,\phi)$ be a convex geometry with property (2Ex). Then it satisfies (ExR) iff it satisfies (Sq).
\end{lem}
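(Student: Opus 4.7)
\emph{Plan.} I would prove the two directions separately: (ExR)$\Rightarrow$(Sq) by a short contradiction argument, and (Sq)$\Rightarrow$(ExR) by induction on $|X'|$ after reducing to the stronger inclusion $\phi(a,z)\setminus\{a\}\subseteq\phi(c,z)$, which handles both premise cases $a\rightarrow y$ and $az\rightarrow y$ because $\phi(a)\subseteq\phi(a,z)$ and $y\ne a$.

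\emph{Direction (ExR)$\Rightarrow$(Sq).} Assume the hypotheses of (Sq). Proposition \ref{ex} gives $a\in Ex(X'\setminus b)$, and (2Ex) bounds $|Ex(X'\setminus b)|\le 2$. If $Ex(X'\setminus b)=\{a\}$ we are done, so write $Ex(X'\setminus b)=\{a,e\}$ with $e\ne a$; a second application of Proposition \ref{ex} to $X'\setminus\{a,b\}\subseteq X'\setminus b$ forces $e\in Ex(X'\setminus\{a,b\})=\{c,d\}$. The cases $e=d$ and $c=d$ already yield the conclusion, so assume $e=c$ and $c\ne d$. Then $X'\setminus b=\phi(a,c)\cap(X'\setminus b)$ gives $ac\rightarrow d$, and $c\in Ex(X'\setminus b)$ together with $a\in X'\setminus\{b,c\}$ forces $a\not\rightarrow c$. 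Applying (ExR) with $z:=c,\,y:=d$ yields $d\in\phi(c)$, and since $c\in X'\setminus\{a,b,d\}$ this contradicts $d\in Ex(X'\setminus\{a,b\})$.

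\emph{Direction (Sq)$\Rightarrow$(ExR).} The case $z=b$ is immediate, since $\phi(a,b)=X'$ and $\phi(c,b)=X'\setminus a$, so assume $z\ne b$. Look at $W:=\phi(a,c,z)$: by (2Ex), $|Ex(W)|\le 2$ and $a\in Ex(W)$; the alternative $Ex(W)=\{a\}$ would force $z\in\phi(a)$, contradicting $a\not\rightarrow z$, so $Ex(W)=\{a,e\}$ with $e\ne a$. A 2-Carath\'eodory argument (which holds under (2Ex) by Proposition \ref{2and1}) rules out $e\notin\{a,c,z\}$: such an $e$ would lie in $\phi$ of two elements of $\{a,c,z\}\subseteq W\setminus\{e\}$, contradicting extremeness. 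If $e=z$, then $c\in\phi(a,z)$ and Proposition \ref{ex} applied twice (to $c\in Ex(X'\setminus a)$ and to $z\in Ex(\phi(a,z))$, which itself follows from $a\not\rightarrow z$) gives $c,z\in Ex(\phi(a,z)\setminus a)$; (2Ex) then forces $Ex(\phi(a,z)\setminus a)=\{c,z\}$, and the desired inclusion follows. If $e=c$, then $z\in\phi(a,c)$ and $\phi(a,z)\subseteq\phi(a,c)$, so it suffices to prove $\phi(a,c)\setminus a\subseteq\phi(c,z)$; I split on $Ex(X'\setminus\{a,b\})$.

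\emph{The final sub-case and the induction.} If $Ex(X'\setminus\{a,b\})=\{c\}$, then $X'\setminus\{a,b\}\subseteq\phi(c)$ and, since $b\notin\phi(a,c)$ (because $b\in Ex(X')$ and $a,c\in X'\setminus b$), we obtain $\phi(a,c)\setminus a\subseteq X'\setminus\{a,b\}\subseteq\phi(c)\subseteq\phi(c,z)$. If $Ex(X'\setminus\{a,b\})=\{c,d\}$ with $c\ne d$, then (Sq) forces $Ex(X'\setminus b)\in\{\{a\},\{a,d\}\}$; the first alternative would give $z\in\phi(a)$, so $Ex(X'\setminus b)=\{a,d\}$. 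Setting $X'':=X'\setminus b$, one has $|X''|<|X'|$, $Ex(X'')=\{a,d\}$, $Ex(X''\setminus a)=\{c,d\}$, and $y,z\in X''\setminus a$ (one checks $y\ne b$ by the same argument that shows $b\notin\phi(a,z)$). The inductive hypothesis applied to $X''$, with $b$ replaced by $d$ and $c$ unchanged, yields $y\in\phi(c,z)$. The main obstacle is precisely this reduction: (Sq) is what pins $Ex(X'\setminus b)$ down to $\{a,d\}$ and thereby preserves the (ExR) configuration in $X''$, allowing the induction on $|X'|$ to close the loop.
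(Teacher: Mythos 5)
Your proof is correct and takes essentially the same route as the paper: your (ExR)$\Rightarrow$(Sq) direction isolates the one remaining possibility $Ex(X'\setminus b)=\{a,c\}$ and derives the contradiction $c\rightarrow d$ exactly as the paper does, and your induction on $|X'|$ in the converse direction is a repackaging of the paper's iterative descent, which strips off the non-$a$ extreme points one at a time and uses (Sq) at each step to keep the pairs $(a,\cdot)$ and $(c,\cdot)$ in lockstep until the second coordinate reaches $z$ or collapses to $c$. One small bookkeeping point: in your $e=z$ sub-case the step ``(2Ex) forces $Ex(\phi(a,z)\setminus a)=\{c,z\}$'' tacitly assumes $c\ne z$; when $c=z$ the two branches coincide and you should route the argument through the $e=c$ branch, which handles it.
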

\begin{proof}
First we show that if (Sq) fails then (ExR) fails as well. Note that if $c=d$, (Sq) always holds: if $Ex(X_1)=\{a,b\}, Ex(X_1\setminus \{a,b\})=\{c\}$ and $Ex(X_1\setminus \{a\})=\{c, b\}$, then $Ex(X_1\setminus \{b\})=\{c, a\}$.

Therefore, (Sq) fails,  if for some subset $X_1$ the assumptions hold: $Ex(X_1)=\{a,b\}, Ex(X_1\setminus a)=\{c,b\}$ and $Ex(X_1\setminus \{a,b\})=\{c,d\}$, where $a\not = b\not = c\not = d$ - but the conclusion fails, i.e. $Ex(X_1\setminus b) \not=\{a,d\}$ and $Ex(X_1\setminus b)\not=\{a\}$.

Now, we know that $a \in Ex(X_1\setminus b)$ and it is not the only extreme point. By (2Ex) there should be exactly one more extreme point, moreover, it should be one of $Ex(X_1\setminus \{a,b\})=\{c,d\}$. Thus, by assumption, there is only one remaining possibility: $Ex(X_1\setminus b)=\{a,c\}$. In particular, $ac\rightarrow d$ should hold, and $a\not \rightarrow c$, because, otherwise, $c \not \in Ex(X_1\setminus b) =\{a,c\}$, a contradiction. Now, if (ExR) holds, then the assumptions: $Ex(X_1)=\{a,b\}, Ex(X_1\setminus a)=\{c,b\}$, $a\not \rightarrow c$ and $ac\rightarrow d$ - would imply $c\rightarrow d$. This contradicts that $d$ is extreme point of $X_1\setminus \{a,b\}$ different from $c$. Therefore, (ExR) fails.

Secondly, we assume that (Sq) holds and show that (ExR) holds. So suppose that $Ex(X_1)=\{a,b\}, Ex(X_1\setminus a)=\{c,b\}$, $a\not \rightarrow z$ and $az\rightarrow y$ hold in some $(X_1,\phi)$. We need to show that $cz\rightarrow y$ also holds. As discussed before, we may assume that $a\not = b\not = c$, also we may assume that $z\not = a$ because of $a\not \rightarrow z$, and that  $z\not = b$, otherwise, the conclusion follows.

Build a sequence of ordered pairs $S$: $(a,b_0=b),(a,b_1),\dots, (a,b_n=a)$, where $\{a,b_{i+1}\}=Ex(X_1\setminus\{b_0,\dots, b_i\})$. We claim that $z=b_i$ for some $i\leq n$. Indeed, if not, then $a\rightarrow z$, which contradicts the assumption. Also, $z,y \in X_1\setminus\{b_0,\dots, b_{i-1}\}$, and since $az\rightarrow y$, $y
\not = b_i$, for all $i$.

 \begin{figure}[H]
 \includegraphics[width=0.5\textwidth]{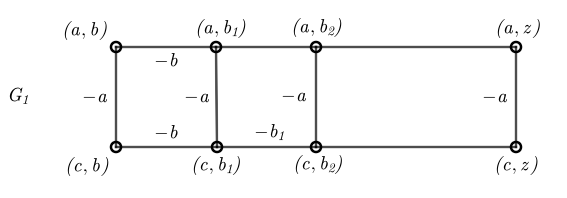}
 \caption{}
 \label{fig:ExR}
 \end{figure}
Now consider pairs of extreme points for $X_1\setminus\{a\}$, $X_1\setminus\{a,b_0\}$, $\dots X_1\setminus\{a,b_0, \dots b_{i-1}\}$.

The left-most vertical edge on the Figure \ref{fig:ExR} corresponds to $Ex(X_1)=\{a,b\}$, $Ex(X_1\setminus a)=\{c,b\}$, and the vertices along the upper edge are marked by pairs of sequence $S$. Since (Sq) holds, the second vertex along the lower edge should be marked $(c,b_1)$. If it happens that $b_1=c$, then $\{c\}=Ex(X_1\setminus\{a,b\})$, therefore, $c\rightarrow y$, hence, also $cz \rightarrow y$. Otherwise, $c\not = b_1$, and we can apply (Sq) to the second square  on the picture and conclude that $Ex(X_1\setminus\{a,b, b_1\})=\{c,b_2\})$. Proceed with this argument and either meet pair $(c,c)$ along the way on the lower edge of the picture, thus, $c\rightarrow y$, or obtain $Ex(X_1\setminus\{a,b_0, b_1, \dots b_{i_1}\})=\{c,z\}$. Since $y \in \phi(a,z)$ and $y\not = a$ and $a$ is an extreme point, then $y \in \phi(c,z)$ and $cz\rightarrow y$, as needed.  


\end{proof}


\begin{lem}\label{Sq}
If $(X,\phi)$ is a convex geometry with $cdim=2$, then it satisfies (Sq).
\end{lem}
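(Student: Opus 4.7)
The plan is to work directly in the segment representation guaranteed by Theorem \ref{segments}. Equivalently, fix linear orders $<_L$ and $<_R$ on $X$ with $(X,\phi) = (X,<_L) + (X,<_R)$. For any $Y \subseteq X$, the restriction $(Y,\phi_Y)$ is still generated by the restricted orders, and a short check using the segment picture shows that $Ex(Y)$ consists exactly of the $<_L$-maximum and the $<_R$-maximum of $Y$ (which may coincide, in which case $|Ex(Y)|=1$). For any $Y$, denote these by $L(Y)$ and $R(Y)$.

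Now translate the hypothesis of (Sq) into this language. Since $Ex(X') = \{a,b\}$ with $a \neq b$, we may assume without loss of generality that $a = L(X')$ and $b = R(X')$; the opposite assignment is handled symmetrically by interchanging the roles of $<_L$ and $<_R$. Removing $a$ leaves $b = R(X' \setminus \{a\})$ unchanged, so the second extreme point must be $c = L(X' \setminus \{a\})$, i.e.\ $c$ is the second $<_L$-maximum of $X'$. Removing $b$ in addition does not disturb the $<_L$-maximum of $X' \setminus \{a\}$, so $L(X' \setminus \{a,b\}) = c$ and therefore $d = R(X' \setminus \{a,b\})$.

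The final step is to compute $Ex(X' \setminus \{b\})$ and match it to the two allowed outcomes. Clearly $L(X' \setminus \{b\}) = a$, since $a \neq b$ was the $<_L$-max of $X'$. The value $R(X' \setminus \{b\})$ is by definition the second $<_R$-maximum of $X'$, and here I split into the only two cases. If the second $<_R$-maximum of $X'$ equals $a$, then $R(X' \setminus \{b\}) = a = L(X' \setminus \{b\})$ and $Ex(X' \setminus \{b\}) = \{a\}$. Otherwise, the second $<_R$-maximum is some element different from $a$; removing $a$ from $X' \setminus \{b\}$ then does not change the $<_R$-maximum, so this element coincides with $R(X' \setminus \{a,b\}) = d$, giving $Ex(X' \setminus \{b\}) = \{a,d\}$. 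Either outcome is permitted by (Sq), so we are done.

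The argument is almost entirely bookkeeping once the representation is in place; the only thing to be careful about is verifying that nothing goes wrong when some of the purported second-maxima actually equal one of the already-named elements (for instance $c = d$, or $a$ being the second $<_R$-maximum). These are precisely the degenerate cases that produce the $\{a\}$ alternative in the conclusion, which is why that alternative is present in the statement.
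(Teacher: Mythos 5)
Your proposal is correct and takes essentially the same route as the paper: both pass to the two-chain representation, identify $Ex(Y)$ with the $<_L$- and $<_R$-maxima of $Y$, and then split on whether $a$ is the second $<_R$-maximum of $X'$ (giving $\{a\}$) or not (giving $\{a,d\}$). The bookkeeping in your two cases matches the paper's dichotomy ``$d<_R b$ with nothing in between'' versus ``$d<_R a<_R b$.''
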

\begin{proof}
Suppose $(X_1,\phi)= (X_1, <_L) + (X_1,<_R)$ for some sub-geometry $X_1\subseteq X$, and $a$ is $<_L$-maximum and $b$ is $<_R$-maximum. Since $(c,b)$ is a pair of extreme points of $X_1\setminus \{a\}$, $c$ must be a $<_L$-maximal element after removal of $a$, so we have $c<_L a$, and no elements are in between. Since $b\not = c$, after removal of $b$ element $c$ remains to be $<_L$ maximal. As $(c,d)$ is a pair of extreme points of $X_1\setminus\{a,b\}$, we have $d<_R b$, and either no other elements are in between, or $d<_R a<_R b$. In the first case, $Ex(X_1\setminus \{b\}) = \{a,d\}$, and in the second case, $Ex(X_1\setminus \{b\}) = \{a\}$, which is needed.
\end{proof}
We can now formulate the main result.

\begin{thm}\label{main}
A convex geometry $(X,\phi)$ has $cdim=2$ iff $(X,\phi)$ satisfies properties (2Ex) and (Sq).
\end{thm}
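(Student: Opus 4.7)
The forward implication is already in hand: Lemma~\ref{Sq} gives (Sq), and (2Ex) follows from the (unnumbered) lemma just before it. For the converse, I would proceed by strong induction on $|X|$, with trivial base cases $|X|\leq 2$. By (2Ex) and Theorem~\ref{extp}, $1\leq|Ex(X)|\leq 2$, and I would split on this value.

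If $Ex(X)=\{a\}$, then $\phi(a)=X$ by Theorem~\ref{extp}, and $X\setminus\{a\}$ is closed since $a$ is extreme. The restriction $(X\setminus\{a\},\phi_{X\setminus\{a\}})$ is a convex geometry that inherits (2Ex) and (Sq), because the extreme points of any $Y\subseteq X\setminus\{a\}$ coincide in the two geometries. By induction it admits chains $<_L',<_R'$; extending each by placing $a$ at the top yields chains on $X$ whose join-alignment equals $\mathcal{F}$, since the closed sets of $(X,\phi)$ are exactly $X$ together with those of the restriction.

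If $Ex(X)=\{a,b\}$ with $a\neq b$, I would designate $a$ as the top of $<_L$ and $b$ as the top of $<_R$ (the opposite choice is the switch symmetry). Since $b\in Ex(X\setminus\{a\})$ by Proposition~\ref{ex}, the inductive representation $(X\setminus\{a\},<_L^*)+(X\setminus\{a\},<_R^*)$ can be chosen so that $b=\max(<_R^*)$. Set $<_L$ to be $<_L^*$ with $a$ adjoined on top, and set $<_R$ to be $<_R^*$ with $a$ inserted at the unique position for which the initial segment of $<_R$ up to $a$ equals $\phi(a)$. Since $a=\max(<_L)$, the smallest closed set of the constructed join that contains $a$ is the smallest initial segment of $<_R$ containing $a$, namely $\phi(a)$, as required.

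The main obstacle is guaranteeing that $\phi(a)\setminus\{a\}$ is realized as an initial segment of $<_R^*$ in some valid inductive representation of $X\setminus\{a\}$; this is where (Sq) enters decisively. One first checks that $\phi(a)\setminus\{a\}$ is closed in $X\setminus\{a\}$ (a short verification from $a\in Ex(X)$). My plan is then to build $<_R^*$ explicitly via a peeling guided by (Sq), equivalently (ExR) from Lemma~\ref{ExR}. Starting with $b$ at the top of $<_R^*$, repeatedly peel the current ``right extreme'': whenever the current extremes are $\{p,q\}$ with $q$ the designated right one, peeling the left extreme $p$ preserves $q$'s role on the right by (Sq) applied to the relevant square of subsets, so the left/right designations propagate unambiguously throughout the peeling. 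The elements of $(X\setminus\{a\})\setminus\phi(a)$ are all peeled off first, leaving $\phi(a)\setminus\{a\}$ as the bottom initial segment of $<_R^*$. Once both chains are in hand, the verification that the join equals $\mathcal{F}$ has two parts: every initial segment of either chain is closed (successive extreme-point removals preserve closure), so all intersections are closed; conversely, every closed $W\subseteq X$ satisfies $W=\phi(Ex(W))$ with $|Ex(W)|\leq 2$ by Theorem~\ref{extp} and (2Ex), which pins down $W$ as the intersection of the smallest initial segments of $<_L$ and $<_R$ containing it.
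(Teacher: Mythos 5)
Your skeleton coincides with the paper's proof of Lemma \ref{main1}: induct on $|X|$, peel off an extreme point $a$, place $a$ on top of the left chain, and insert $a$ into the right chain at the position cut out by $\phi(a)$. The forward direction is handled the same way as in the paper. But the step you yourself call ``the main obstacle'' is exactly where your argument stops being a proof. From the inductive representation of $X_1=X\setminus\{a\}$ you need two things: (i) that $\phi(a)\setminus\{a\}$ is an initial segment of the right chain, and (ii) that the elements of $Z=X_1\setminus\phi(a)$ sitting above $a$ in $<_R$ are ordered so that $az\rightarrow z'$ forces $z'<_R z$; otherwise the initial segment of $<_R$ ending at $z$ is not the closed set $\phi(a,z)$ and the join of the two linear alignments does not reproduce $\phi$. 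Your peeling description only asserts that the left/right designations of extreme points propagate under (Sq); it never explains why every element of $Z$ can be peeled off the right chain \emph{before} any element of $\phi(a)\setminus\{a\}$, and it does not address (ii) at all. This is not bookkeeping: a generic representation of $X_1$ need not have $\phi(a)$ as an initial segment of either chain, and one must exploit the freedom to switch chains within the blocks $P_i$ of the decomposition and then prove, for each pair $y,z$ in the critical block $P_m$ with $z\notin\phi(a)$ and $a\rightarrow y$ or $az\rightarrow y$, that $y<_R z$.

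That single claim is where the paper spends essentially all of its effort: it splits into the case $y>_L z$ (settled by anti-exchange) and $z>_L y$ (settled by (ExR), i.e.\ Lemma \ref{ExR}), and the latter case requires building, via Lemma \ref{seq} and a chain of (Sq)-squares, a sub-geometry $P_m^{**}$ on which (ExR) applies to yield $z\rightarrow y$. None of that machinery appears in, or is replaced by, your proposal, so the converse direction is unproved as written. Two smaller points: you should justify that the restriction to $X_1$ inherits (2Ex) and (Sq) (true, via Proposition \ref{ex}, but it deserves a line), and your closing verification that the join of the constructed chains equals $\mathcal{F}$ silently presupposes (i) and (ii), so it cannot be quoted as independent confirmation that the construction works.
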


The main effort is to show that our two properties guarantee that $cdim=2$. The Theorem will hold true due to statements of Lemmas \ref{ExR}, \ref{Sq} and \ref{main1}.

\begin{lem}\label{main1}
If convex geometry $(X,\phi)$ satisfies properties (2Ex) and (Sq) then it is represented by segments on a line.
\end{lem}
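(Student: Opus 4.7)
My plan is to proceed by induction on $|X|$; the base cases $|X|\leq 2$ are immediate. A preliminary observation is that both (2Ex) and (Sq) are inherited by every restriction $(Y,\phi_Y)$ with $Y\subseteq X$, since the notions of extreme point and closure behave compatibly with restriction. Thus the induction hypothesis applies to proper sub-geometries. By Theorem \ref{extp}, $Ex(X)\neq\emptyset$, and by (2Ex), $|Ex(X)|\leq 2$. The case $|Ex(X)|=1$, where $\phi(\{a\})=X$ for the unique extreme point $a$, is handled routinely by extending a representation of $X\setminus\{a\}$ (from the induction hypothesis) with $a$ placed at the top of both chains; one checks the join recovers $(X,\phi)$ using that $a$ is extreme.

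For the substantive case $|Ex(X)|=2$ with $Ex(X)=\{a,b\}$, my plan is to construct the two linear orders $<_L,<_R$ directly by a coordinated peeling of extreme points. Designate $a$ as ``L-type'' and $b$ as ``R-type'', and propagate these designations to subsequent stages: after removing a peeled element, the newly emerged extreme point of the reduced sub-geometry inherits the type of the element just removed. The crucial consistency of this labeling -- that the L/R labels on the extreme points of $X\setminus\{a,b\}$ are the same whether $a$ or $b$ is peeled first -- is exactly what (Sq) enforces, through the conclusion that $Ex(X\setminus\{b\})\in\{\{a,d\},\{a\}\}$ given the pattern $Ex(X)=\{a,b\}$, $Ex(X\setminus\{a\})=\{c,b\}$, $Ex(X\setminus\{a,b\})=\{c,d\}$. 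Iterating yields two peeling sequences whose reversals are declared to be $<_L$ and $<_R$; each is automatically a linear sub-geometry of $(X,\phi)$ since successively peeling extreme points traces a maximal chain of closed sets.

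The main obstacle is verifying that $(X,<_L)+(X,<_R)$ equals $(X,\phi)$. The forward inclusion is automatic, since both chains are sub-geometries of $(X,\phi)$, and so is their join. For the reverse, I plan to show by induction on $|X|$ that every closed set $C$ of $(X,\phi)$ admits the form $L\cap R$ for initial segments $L,R$ of the constructed chains. The case $a,b\notin C$ reduces directly to the induction hypothesis applied to $X\setminus\{a,b\}$, using that our peeling restricts correctly to this sub-geometry. The remaining cases, where at least one of $a,b$ lies in $C$, are the most delicate: they require showing that the closed sets of $(X,\phi)$ containing a given extreme point form a chain under inclusion, which is where (Sq) -- or equivalently (ExR) via Lemma \ref{ExR} -- does its heaviest lifting. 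An incomparable pair of closed sets both containing $a$ would, via a sequence of extreme-point removals tracking along the R-peeling, force a configuration prohibited by (ExR), yielding a contradiction. Once this chain structure is in hand, each such $C$ is recognizable as an initial segment of $<_R$ (respectively $<_L$), with the opposite factor simply taken to be all of $X$, completing the verification.
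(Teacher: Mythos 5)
Your peel-and-label strategy is a genuinely different route from the paper's (which first represents $X\setminus\{a\}$ by induction and then searches for a consistent slot for $a$ in the right chain), but as written it has a real gap at the step you relegate to a parenthetical: ``using that our peeling restricts correctly to this sub-geometry.'' That commutation claim --- that the L- and R-peelings of $X$ induce on $X\setminus\{a,b\}$ the same pair of orders as the intrinsic peeling of $(X\setminus\{a,b\},\phi)$ --- is not bookkeeping; it carries essentially the entire weight of the theorem, and a single application of (Sq) at the top corner only synchronizes the labels on $Ex(X\setminus\{a,b\})$. To match the $i$-th peeled element of $X\setminus\{a,b\}$ with the restriction of the chains of $X$ you must propagate (Sq) across a whole grid of removals, which is exactly the iterated-square argument of Lemma \ref{ExR} and of the sequence $(u_1,v_1),\dots,(u_k,v_k)$ in the paper's proof. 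Example \ref{notsuf} shows why your case split does not localize the difficulty where you think it does: there the closed sets containing $a$ form a chain and coincide exactly with the down-sets of the peeled $<_R$, and likewise for $b$ and $<_L$, yet the geometry has $cdim=3$ because the closed set $\{c\}$ (containing neither extreme point) is not an intersection of down-sets --- precisely because the restriction of the peeled orders to $\{c,d\}$ is \emph{not} the intrinsic peeling of $\{c,d\}$. So your criterion for the ``delicate'' case (closed sets through a fixed extreme point form a chain) is necessary but demonstrably insufficient, while the case you call a direct reduction is the one that fails without further argument.

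A secondary gap is the well-definedness of the labeling itself. If at some stage the current sub-geometry has a single extreme point $\beta$ (equivalently, the two chains share an ending segment, the situation of Lemma \ref{suffi}), then after $\beta$ is peeled two new extreme points appear and your inheritance rule assigns a type to neither. You must either prove that both assignments extend to valid representations (this is the content of Lemma \ref{suffi} and its corollary, not a formality) or fix a canonical choice and carry it through the commutation argument above. Until both points are addressed, the proposal is a plausible outline rather than a proof.
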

\begin{proof}
We will use induction on $|X|$. Apparently, geometry on one-element set is represented by one segment. Therefore, we assume that the geometry on $(n-1)$-element set is represented by segments as long as two properties are satisfied, and that we are give a geometry with two properties on set $X$ with $|X|=n$. 

We take $Ex(X)=\{a,b\}$ and consider $X_1=X\setminus \{a\}$. If geometry $(X,\phi)$ has properties (2Ex) and (Sq), then its projection on $X_1$ also does, therefore, $(X_1, \phi)$ is represented by segments, by inductive assumption. 

Let $(X_1,\phi)$ be represented as $(\tilde{P_k}\  \dots \ \tilde{P_2} \ \tilde{P_1}\  \nabla \ P_1\ P_2\ \dots P_k)$, where $X_1=P_1\dot\bigcup P_2\dot\bigcup \dots \dot\bigcup P_k$ is a partition, and $\tilde{P_i}$ is a permutation of $P_i$, and each projection $(P_s, \phi)$, $s\leq k$, has a unique representation. We may assume that, moreover, $(P_s, \phi) = (P_s, <_L)+(P_s,<_R)$ where the ending segments for $k$ elements are distinct for $1\leq k \leq n-1$.  

Return to the fact $Ex(X)=\{a,b\}$. If $a=b$, i.e. $a$ is a unique extreme point of $(X,\phi)$, we can place point $a$ as a maximal element in both $<_L$ and $<_R$ and obtain the representation for $(X,\phi)$. 

Therefore, we assume that $a\not = b$.
Then $b \in Ex(X_1)$, therefore, $b$ must be  a maximal element in one of two chains representing uniquely projection $(P_k, \phi)$. Without loss of generality, we may assume that the sub-chain where $b$ is a $<_R$-maximal can be switched into the right chain.

We need to show that both the left chain and the right chain have locations for $a$, which do not contradict any information about $a$ that one infers from $\phi$. Since $a \in Ex(X)$, we may assume its maximal position extending the left chain of $X_1$, while second extreme point $b$ is maximal in the right chain.

The rest of the proof is to show that an appropriate location exists for element $a$ in the right chain.

First look into the right scheme-chain of the segments: $(P_1, P_2, \dots, P_k)$.

Consider partition $X_1=Y\dot\cup Z$, where $a\rightarrow Y\setminus Z$, i.e. $Y = \phi(a)$ and $Z= X_1\setminus \phi(a)$. Let $m$ be the largest index for which $P_m \cap Y\not = \emptyset$. It follows that, for every $s > m$ (in case $m\not = k$)  and every $w \in P_s$ we have $a \not \rightarrow w$. Also, for every $t<m$ (in case $m\not = 1)$, we have $y\rightarrow u$, for each $y \in P_m$ and $u \in P_t$. Since we assumed that there exists $y \in P_m\cap Y$, we have $a\rightarrow y\rightarrow u$, for all $u \in P_t$. In other words, $a\rightarrow u$ for all elements of $P_t$ that come earlier that $P_m$ in the scheme-chain of segments $(P_1 \dots P_t \dots  P_m \dots P_s \dots  P_k )$, and $a\rightarrow z$ for all $z$ in all $P_s$ that come after $P_m$.

If $P_m\subseteq Y$, then we have a prospective location for $a$ immediately after $P_m$ and before $P_{m+1}$.

More generally, we have that $P_m= (P_m\cap Y)\cup(P_m\cap Z)$, where both sets $P_m\cap Y$ and $P_m\cap Z$ are not empty.

Consider projection $(P_m,\phi)$ and let $Ex(P_m)=\{b_1,c_1\}$. Then $Ex(P_m\cup a, \phi)=\{a,b_1\}$, or $Ex(P_m\cup a, \phi)=\{a,c_1\}$ or $Ex(P_m\cup a, \phi)=\{a\}$. The last case is already considered before, because it implies $P_m\subseteq Y$.
From two remaining cases, we may assume, without loss of generality, that $Ex(P_m\cup a, \phi)=\{a,b_1\}$. We may choose appropriate representation of $(X_1,\phi)$, where the chain from the unique representation of $(P_m,\phi)$ that has maximal element $b_1$ is located on the right, and the other chain is on the left.

Thus, the setting of the remaining problem is that we have a projection $(P_m, \phi)$ which has a unique representation: $(c_1 \dots \nabla \dots b_1)$ and $Ex(P_m\cup a)=\{a,b_1\}$. We assume that $a$ has a perspective location as a  $<_L$-maximal element, and that we need to find a location for $a$ on the right.

Element $a$ will have a proper location on the right, if 
\begin{itemize}
    \item elements from $P_m\cap Y$ form initial segment of the chain on the right;
    \item if $z\not = z' \in Z\cap P_m$ and $az \rightarrow z'$, then $z'<_R z$.
\end{itemize}

If these conditions are satisfied then $a$ can be placed as follows in the right chain: $(Y \ a \ Z)$. Moreover, the order of elements in $Z$ will be in agreement with all implications involving $a$. Observe that, for any two elements $z,z' \in Z$, we should have $Ex(a,z,z')=\{a,z\}$ or $Ex(a,z,z')=\{a,z'\}$, which implies that either $az \rightarrow z'$ or $az' \rightarrow z$ holds.

Note that both conditions above will be satisfied, if we show, for every pair $y,z \in P_m$, where $z \in Z$, that if $a\rightarrow y$ or $az\rightarrow y$, then $y <_R z$. Indeed, here $y$ plays the role of either element in $Y$ or element $z'$.

Thus, we will fix terms as following. We assume that $a \not\rightarrow z$ and [$a\rightarrow y$ or $az\rightarrow y$]. We will need to show $y <_R z$.

Let us assume that the unique representation of $(P_m, \phi)$ is\\ $(c_1\  c_2 \  \dots c_t \  (zy) \dots \nabla \dots (zy) \dots b_1)$, where $(zy)$ indicates a position on the left (or on the right), where the corresponding maximal of $y,z$ appears.

(A) First, consider the case when $y>_L z$, i.e. $y$ is maximal among $y,z$ on the left. We will show that assumption $z<_R y$ will bring to a contradiction, therefore, $y<_R z$ in this case, as desired.

Indeed, assumption $z<_R y$, together with $y<_L z$, will lead to $y\rightarrow z$ in $(P_m,\phi)$, thus, in $(X,\phi)$. If $a\rightarrow y$, we will get a contradiction with assumption $a \not\rightarrow z$. If If $az\rightarrow y$, then together with $y \rightarrow z$, the anti-exchange property of the convex geometry will lead to $a \rightarrow y,z$, again a contradiction with the assumption $a\not\rightarrow z$.

(B) Now assume that $z>_L y$. If $t=0$, i.e. $z$ is an extreme point and the maximal element in the left chain, we can apply (ExR): having $Ex(P_m\cup a)=\{a,b_1\}$ and $Ex(P_m)=\{z,b_1\}$, then [$a \not\rightarrow z$ and ($a\rightarrow y$ or $az\rightarrow y$)] implies [$z\rightarrow y$]. This would imply that $y<_Rz$, as needed.

So now assume that $t\geq 1$.  By assumption, $(P_m, \phi)$ has (ES) property of Lemma \ref{seq}: the last segments of $t$ elements in two chains should be distinct, when $1\leq t \leq n-1$. Let $p$ be the largest element in the the final $t$ elements in the right chain that is distinct from all $c_1,\dots, c_t$.

Apply Lemma \ref{seq} to $(P_m,\phi)$ to build a sequence $x_1,\dots x_n$, and then use it to produce sequence of ordered pairs $(u_1,v_1), \dots , (u_k,v_k)$ as follows: 
\begin{itemize}
    \item [(1)] $(u_1,v_1)=(c_1,b_1)$ and $(u_{i+1},v_{i+1})$ is an ordered pair of extreme points of $P_m\setminus \{x_1,\dots, x_i\}$, $i\leq k$;
    \item [(2)] $u_i\not = v_i$, for all $i\leq k$;
    \item[(3)] $v_k=p$ and ($u_k=c_i$ or $u_k=z$).
\end{itemize}

We can proceed by removing extreme points $x_1, \dots, x_n$ either on the left or on the right, so that remaining projection has (ES) property and (1) and (2) are satisfied. We only need to show that on some $k$th step in the process we will have a pair satisfying (3).

If $p=b_1$, then (2) is already satisfied. If $p <_R b_1$, then let $T>0$ be the number of elements in the right chain greater than $p$. According to the definition of element $p$, all these $T$ elements will appear among $c_1,\dots c_t$.  We will show that $b_1=x_j$ for some $j\leq k$, thus $b_1$ will be removed at step $j$ and the remaining projection will still have property (ES) described in Lemma \ref{seq}. Since $b_1$ appears among $c_1, \dots c_t$ we have $c_1 >_L \dots >_L c_s >_L b_1=c_{s+1} >_L \dots >_L c_t$. We cannot have $x_1=c_1, x_2=c_2 \dots x_s=c_s$, because removing $c_s$ we obtain   $(b_1,b_1)$, which violates (ES) property. Therefore, $b_1=x_j$ for some $j\leq s$, so that the remaining projection with end points $(u_{j+1},v_{j+1})$ still satisfies (ES), but now $p<_R v_{j+1}$ with less than $T$ elements $>_R p$.

If $p=v_{j+1}$, then we already have pair satisfying (3). Otherwise, we will have $p <_R v_{j+1}$, where $v_{j+1}$ was not among $\{x_1,\dots x_j\}\subseteq \{c_1,\dots, c_t\}$, thus, it still appears among remaining $c_1, \dots c_t$ in the left chain. So the same argument can be applied to a projection with extreme points $(u_{j+1},v_{j+1})$.

Thus, we have a sequence $S$: $(c_1,b_1), (u_2,v_2) \dots (u_k,v_k)=(c_i,p)$ or $(u_k,v_k)=(z,p)$, where $x_1\in \{c_1,b_1\}$, and $x_s\in \{u_s,v_s\}$, for $1 < s < k$.

Recall that $\{c_1,b_1\}=Ex(P_m)$ and $\{a,b_1\}=Ex(P_m\cup \{a\})$. We now want to apply the same sequence $x_1, \dots, x_k$ of removals to $(P_m\cup \{a\})$ and observe the ordered pairs of extreme points in the process.

If $x_1=c_1$, then $(u_2,v_2)=(c_2,b_1)$. Since $c_1$ is not an extreme point of $(P_m\cup \{a\})$, we have $(a,b_1)=Ex((P_m\cup \{a\}\setminus\{x_1\})$, thus, the pair of extreme points does not change when $x_1=c_1$. On the other hand, if  $x_1=b_1$, and say $(u_2,v_2)=(c_1,b_2)$, then by (Sq) property we will have $Ex((P_m\cup \{a\}\setminus\{b_1\})=\{a, b_2\}$ or $\{a\}$. Therefore, removal of $x_1$ in $(P_m\cup \{a\})$ will bring to pair $(a,b_2)$ in second case, which is a change from $b_1$ to $b_2$ in second component, like in case of sequence $S$ after the first step of removal of $x_1$. Alternately, we can get $(a,a)$ which implies that $a \rightarrow P_m\setminus\{b_1\}$. In this case, all consecutive steps of removals of $x_i$ will not change pair $(a,a)$.
 
Applying the same argument to the step when $x_2$ is removed, we observe that the ordered pair of extreme points of $(P_m\cup \{a\}\setminus\{x_1,x_2\})$ does not change, if $x_2=u_2$, or changes to $(a,v_3)$ or $(a,a)$, if $x_2=v_2$.

To illustrate the process, consider example on Figure \ref{fig:seq1}. The picture gives a partial representation of two chains of $(P_m,\phi)$, with parameters $t=4$ and $T=3$.

\begin{figure}[H]
 \includegraphics[width=0.5\textwidth]{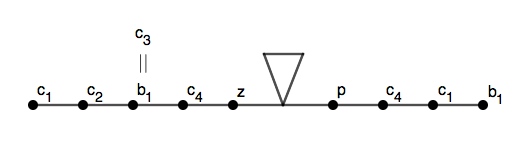}
 \caption{}
 \label{fig:seq1}
 \end{figure}

\begin{figure}[H]
 \includegraphics[width=0.8\textwidth]{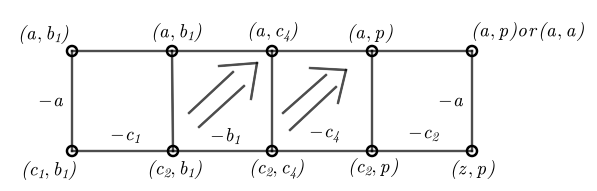}
 \caption{}
 \label{fig:seq2}
 \end{figure}
 
On Figure \ref{fig:seq2}, the sequence $(u_1,v_1), \dots (u_5,v_5)$ is given along the lower edge with the sequence of removed elements  $x_1=c_1,x_2=b_1,x_3=c_4, x_4=c_2$. Note that pair $(c_2,p)$ is obtained after three steps. The upper edge corresponds to the the same sequence of removed elements in geometry $Ex(P_m\cup \{a\})$. It shows that the pair does not change when $x_i$ is the left-end point in the corresponding pair on the lower edge, and (Sq) is applied with indicating arrow within the square, when $x_i$ is the right-end point of a pair on the lower edge.


Denote $P_m^*=P_m\cup\{a\}\setminus\{x_1,\dots, x_k\}$.

If $p=y$, then we have $ay\rightarrow z$,which is together with assumption $a\rightarrow y$ or $az\rightarrow y$, as well as the the anti-exchange property, will imply $a\rightarrow z$, a contradiction. If $p=z$, then we have $y\leq_R z$, which is needed.

Suppose we have $Ex(P_m^*)=\{a,p\}$ and either $Ex(P_m^*)=\{c_i,p\}$ or $Ex(P_m^*)=\{z,p\}$. In the first case, consider $P_m^{**}=P_m^*\setminus \{c_i,\dots, c_t\}$. Then $Ex(P_m^{**})=\{a,p\}$, because all removed points are not extreme. On the other hand, $Ex(P_m^{**}\setminus \{a\})=\{z,p\}$, by assumption on points $c_1,\dots, c_t$ and $p$.

Then we can apply property (ExR): given $Ex(P_m^{**})=\{a,p\}$ and $Ex(P_m^{**}\setminus \{a\})=\{z,p\}$, the assumption of [$a\rightarrow \setminus z$ and ($a\rightarrow y$ or $az\rightarrow y$)] implies [$z\rightarrow y$]. The last implication $z\rightarrow y$ yields that $y<_R z$ on the right, which is needed.

Finally, if $Ex(P_m^*)=\{a\}$, then $a \rightarrow z$, a contradiction with $a\not \rightarrow z$.

This finishes the proof.
\end{proof}

At the end we want to discuss the question of the complexity of the problem to recognize whether a convex geometry given by its operator, say, by implicational basis $S$, has $cdim=2$. We will assume that the geometry is defined on $X$ with $|X|=n$ by the basis of $m$ implications, and the size of $S = \{A_i\rightarrow B_i\}$ is computed as $s(S)=\Sigma |A_i| + \Sigma |B_i|$.

\begin{lem}\label{triple}
For any closure system $(X,\phi)$ property (2Ex) holds iff for any $a,b,c \in X$ one of implications $ab\rightarrow c$, $ac\rightarrow b$, $bc\rightarrow a$ holds.  
\end{lem}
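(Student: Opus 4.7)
The plan is to reduce everything to looking at the three-element set $X'=\{a,b,c\}$ and exploiting the definition of extreme point together with monotonicity of $\phi$.

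For the forward direction, I would assume (2Ex) and fix any triple $a,b,c\in X$. If the three elements are not pairwise distinct, then at least two of the three implications have the trivial form $uv\rightarrow u$, and the statement is immediate; so assume $a,b,c$ are distinct. Set $X'=\{a,b,c\}$. By (2Ex) we have $|Ex(X')|\leq 2$, so at least one of $a,b,c$ fails to be an extreme point of $X'$. If this element is $c$, then $c\in\phi(X'\setminus\{c\})=\phi(\{a,b\})$, which is exactly the implication $ab\rightarrow c$; the other two possibilities give $ac\rightarrow b$ and $bc\rightarrow a$ respectively.

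For the reverse direction, I would argue the contrapositive. Suppose (2Ex) fails, so some $X'\subseteq X$ admits three distinct extreme points $a,b,c$. By the hypothesis, one of $ab\rightarrow c$, $ac\rightarrow b$, $bc\rightarrow a$ holds; without loss of generality take $ab\rightarrow c$, that is, $c\in\phi(\{a,b\})$. Since $\{a,b\}\subseteq X'\setminus\{c\}$, monotonicity of $\phi$ gives $c\in\phi(X'\setminus\{c\})$, contradicting that $c$ is an extreme point of $X'$.

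There is no real obstacle: both directions are immediate from the definition of extreme point combined with monotonicity, and the only minor point to mention is that the case of coinciding $a,b,c$ in the forward direction is vacuous. The value of the lemma is computational — it recasts (2Ex) as a polynomially checkable condition on triples, which is what the complexity discussion following the statement is meant to exploit.
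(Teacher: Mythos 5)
Your proof is correct and follows essentially the same route as the paper's: for the forward direction, apply (2Ex) to the three-element set $\{a,b,c\}$ and note that a non-extreme point yields the corresponding implication; for the converse, argue contrapositively that three distinct extreme points of some $X'$ kill all three implications via monotonicity. Your treatment is slightly more careful than the paper's (you handle the degenerate case of coinciding elements and spell out the monotonicity step), but there is no substantive difference.
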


\begin{proof}
If (2Ex) holds and $a,b,c \in X$, then set $X'=\{a,b,c\}$ should have at most two extreme points, therefore, one of points is not extreme. If it is, say, $c$, then $ab\rightarrow c$ holds.

Now if (2Ex) fails, then some $X'\subseteq X$ has at least three extreme points $a,b,c$. Then none of $ab\rightarrow c$, $ac\rightarrow b$, $bc\rightarrow a$ would hold. Thus, the other property of Lemma fails as well.
\end{proof}

\begin{cor}
Let $(X,\phi)$ be a convex geometry given by implicational basis $S$, $|X|=n$, $|S|=m$ and $s(S)=k$.
The number of steps required to verify that $cdim=2$ is $O((k+m)n^3)$. 
\end{cor}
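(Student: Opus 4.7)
The plan is to bound separately the cost of checking the two characterizing conditions (2Ex) and (Sq) from Theorem \ref{main}, and show that both fit within the claimed budget. I would first recall that, via a standard forward-chaining closure algorithm with counters, the closure $\phi(Y) = S(Y)$ of any $Y \subseteq X$ can be computed in time $O(k+m)$, so precomputing the table of pair closures $\phi(\{x, y\})$ for all $\binom{n}{2} = O(n^2)$ unordered pairs costs $O((k+m)n^2)$.

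For (2Ex) I would apply Lemma \ref{triple}: the property holds iff for every unordered triple $\{a,b,c\} \subseteq X$ at least one of the implications $ab \rightarrow c$, $ac \rightarrow b$, $bc \rightarrow a$ holds. Each such check is a constant-time lookup in the pair-closure table, and there are $\binom{n}{3} = O(n^3)$ triples, so this phase runs in time $O((k+m)n^2 + n^3) \subseteq O((k+m)n^3)$.

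For (Sq), rather than enumerate the exponentially many subsets $X' \subseteq X$ directly, I would run the constructive peeling procedure implicit in the proof of Lemma \ref{main1}. Having verified (2Ex), iteratively identify the at-most-two extreme points of the current set $X_i \subseteq X$ by testing, for each candidate $x \in X_i$, whether $x \in \phi(X_i \setminus \{x\})$; this costs $O(n(k+m))$ per step. Peel one extreme point off at each stage, recording it as the current maximum of one of two tentative chains $(X,<_L), (X,<_R)$. After $n$ peelings (total $O(n^2(k+m))$) we hold a candidate representation $(X,<_L) + (X,<_R)$. Finally, verify via Theorem \ref{segments} that the chain-induced geometry coincides with $(X,\phi)$ by comparing, for each of the $O(n^2)$ pairs $\{x,y\}$, the precomputed entry $\phi(\{x,y\})$ with the set of elements lying between $x$ and $y$ in both chains. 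If this comparison detects a mismatch, then (Sq) is violated and $cdim > 2$; otherwise Theorem \ref{main} certifies $cdim = 2$.

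The principal obstacle is justifying that this single constructive/verification pass genuinely certifies (Sq) for \emph{all} subsets $X' \subseteq X$, not merely the $n$ sets appearing in the peeling sequence. This is handled by the equivalence in Theorem \ref{main}: if the pair-closure comparison succeeds, the candidate chains generate $\phi$, so $cdim = 2$ and both (2Ex) and (Sq) must hold automatically; conversely any failure of (Sq) on some $X'$ would, by Lemma \ref{main1}, prevent the chains built by peeling from reproducing $\phi$ and hence surface at the pair-closure level. Summing the two phases yields the stated bound $O((k+m)n^3)$.
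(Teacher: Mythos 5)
Your treatment of (2Ex) matches the paper's: both reduce to Lemma \ref{triple} and check all $O(n^3)$ triples against pair closures, and your precomputation of the $O(n^2)$ pair closures is a reasonable refinement. For (Sq), however, you depart from the paper, which checks the property directly: under (2Ex) every closed set equals $\phi(a,b)$ for its at most two extreme points (Theorem \ref{extp}), so (Sq) only needs to be tested on the $O(n^2)$ pairs that are extreme points of some closed set, each test costing $O(mn)$, for $O(mn^3)$ in total. You instead try to \emph{construct} a two-chain representation by greedy peeling and then verify it against the pair closures.

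The verification half of your argument is sound: if the chain-induced closure agrees with $\phi$ on all pairs, then (since (2Ex) implies 2-Carath\'eodory by Proposition \ref{2and1}, so both operators are determined by their pair closures) the two geometries coincide and $cdim=2$. The gap is in the other direction. You assert that if $cdim=2$ then the peeling procedure will produce a valid representation, but the procedure as described is underspecified and its success is not automatic. At each stage you must decide which of the (up to) two extreme points is the current $<_L$-maximum and which is the $<_R$-maximum, and you must cope with stages where the current set has a \emph{single} extreme point, so that the next chain element is not revealed by the extreme points of that set alone; Lemma \ref{neces} and Example \ref{unique} show that recovering the chains requires removing carefully chosen \emph{ending segments} from both sides, and the proof of Lemma \ref{seq} shows that only one of the two possible removals may preserve the property needed to continue. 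Representations are also non-unique in general (Lemma \ref{suffi}, Theorem \ref{unique-rep}), so a greedy commitment could steer you into a dead end that does not certify $cdim>2$. Lemma \ref{main1} proves \emph{existence} of a representation by a delicate induction with explicit choices of which sub-chain goes left or right; it does not show that an arbitrary greedy peeling finds one. Without a proof that your construction succeeds whenever (2Ex) and (Sq) hold --- which would essentially amount to re-proving Lemmas \ref{seq} and \ref{main1} in algorithmic form --- a failure of your final pair-closure comparison does not let you conclude $cdim\neq 2$. The complexity accounting itself is fine (indeed tighter than $O((k+m)n^3)$), but the correctness of the algorithm is the missing piece.
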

\begin{proof}
According to Lemma \ref{triple}, we need to verify, for each triple of elements from $X$, whether one of elements is in a closure of two others. It takes linear time on the size of the basis to verify that $a \in \phi(b,c)$. Thus, it takes $O(kn^3)$ time to verify (2Ex).

For any $a,b \in X$, $Ex(\phi(a,b))$ is either $a$, or $b$ or $a,b$, so the property (Sq) has to be checked for pairs $(a,b)$ that are extreme points of some closed set. To find extreme point $c$ of $\phi(a,b)\setminus \{a\}$ takes $O(mn)$ steps, by verifying that $c$ does not appear as a consequent of any implication that does not involve $a$, see Lemma 14. Thus, it takes $O(n^2\cdot mn)=O(mn^3)$ to check (Sq).  
\end{proof}

{\it Acknowledgements.} We are grateful to Hofstra University that provided funds for both authors to travel to the conference Algebras and Lattices in Hawai'i-2018, where the results of this paper were presented. We want to thank Madina Bolat (University of Illinois in Urbana-Champaign), for her valuable comments and the help in producing the pictures. 

\end{document}